\newtheorem{theorem}{Theorem}[section]
\newtheorem{lemma}[theorem]{Lemma}
\newtheorem{corollary}[theorem]{Corollary}
\theoremstyle{definition}
\newtheorem{definition}[theorem]{Definition}
\newtheorem{proposition}[theorem]{Proposition}
\theoremstyle{remark}
\newtheorem{remark}[theorem]{Remark}
\begin{document}

\title
{Quotients with respect to strongly $L$-subgyrogroups }
\author{Ying-Ying Jin}\thanks{}
\address{(Y.-Y. Jin) Department of General Required Courses, Guangzhou Panyu Polytechnic, Guangzhou 511483, P.R. China} \email{yingyjin@163.com; jinyy@gzpyp.edu.cn}

\author{Li-Hong Xie*}\thanks{* The corresponding author.}
\address{(L.-H. Xie) School of Mathematics and Computational Science, Wuyi University, Jiangmen 529020, P.R. China} \email{yunli198282@126.com; xielihong2011@aliyun.com}

\thanks{
This work is supported by the Natural Science Foundation of Guangdong
Province under Grant (Nos. 2020A1515110458, 2021A1515010381), the Innovation Project of Department of Education of Guangdong Province (No. 2022KTSCX145), Scientific research project of Guangzhou Panyu Polytechnic (No. 2022KJ02), and Natural Science Project of Jiangmen City (No. 2021030102570004880).}

\subjclass[2010]{54H11; 22A30; 22A22; 20N05; 54H99}

\keywords{Topological gyrogroup; quotient topology; locally perfect mappings; local pseudocompactness; local
paracompactness}

\begin{abstract}
A topological gyrogroup is a
gyrogroup endowed with a compatible topology such that the multiplication is
jointly continuous and the inverse is continuous.
In this paper, we study the quotient gyrogroups in topological gyrogroups with
respect to strongly $L$-subgyrogroups, and prove that let $(G, \tau,\oplus)$ be a topological gyrogroup and $H$ a closed strongly $L$-subgyrogroup of $G$, then the natural homomorphism $\pi$ from a topological gyrogroup $G$ to its quotient topology on $G/H$ is an open and continuous mapping, and $G/H$ is a homogeneous $T_1$-space.
We also establish that for a locally compact strongly $L$-subgyrogroup $H$ of a topological gyrogroup $G$, the natural quotient mapping $\pi$ of $G$ onto the quotient space $G/H$ is a locally perfect mapping. This leads us to some interesting results on how properties of $G$ depend on the properties of $G/H$. Some classical results in topological groups are generalized.
\end{abstract}

\maketitle

\section{Introduction}
In topological groups, the following general problem was considered by some scholars. Let $G$ be a topological group, $H$
a closed subgroup of $G$, and $G/H$ the quotient space. Suppose that both $H$ and
$G/H$ belong to some nice class of topological spaces. When can we conclude that
$G$ is in the same class? Graev proved that if $H$ and $G/H$ are metrizable, then $G$ is also metrizable \cite{Gr}. Serre established that if $H$ and $G/H$ are locally compact, then the topological group $G$ is also locally compact \cite{Ser}. In 2005, Arhangel'ski established that the following result:

\begin{theorem}\cite[Theorem 1.2]{ARR}
Suppose that $G$ is a topological group, $H$ a locally compact subgroup of $G$, and $\pi : G \rightarrow G/H$ the natural quotient mapping. Then there exists an
open neighbourhood $U$ of the neutral element $e$ such that $\pi(\overline{U})$ is closed in $G/H$
and the restriction of $\pi$ to $\overline{U}$ is a perfect mapping of $\overline{U}$ onto the subspace $\pi(\overline{U}).$
\end{theorem}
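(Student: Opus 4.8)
The plan is to run the classical ``proper action'' argument: the subgroup $H$, acting on $G$ by right translations, behaves like a locally compact group, so a sufficiently small \emph{closed} neighbourhood of $e$ is ``almost $H$-saturated'' and $\pi$ restricts to a perfect map on it. Throughout I take $G$ to be Hausdorff, as is standard when speaking of locally compact subgroups; then $H$ is automatically closed in $G$, $\pi$ is an open quotient mapping with $\pi^{-1}(\pi(A))=AH$ for every $A\subseteq G$, and a set $B\subseteq G/H$ is closed exactly when $\pi^{-1}(B)=BH$ is closed in $G$. First I would fix, using the local compactness of $H$ and the regularity of $G$, an open set $W\ni e$ in $G$ for which $F:=\overline{W}\cap H$ is compact: choose $O$ open in $H$ with $e\in O$ and $\operatorname{cl}_H(O)$ compact, write $O=H\cap W_0$, pick $W$ open in $G$ with $e\in W\subseteq\overline{W}\subseteq W_0$; then $F=\overline{W}\cap H$ is a closed subset of $\operatorname{cl}_H(O)$, hence compact, and it is a neighbourhood of $e$ in $H$. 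Then, by continuity of multiplication and inversion, choose an open \emph{symmetric} neighbourhood $U$ of $e$ in $G$ with $\overline{U}\,\overline{U}\,\overline{U}\subseteq W$; the claim is that this $U$ works.

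Second, I would check that $\pi|_{\overline{U}}$ has compact fibres. For $x\in\overline{U}$ the fibre of $\pi|_{\overline{U}}$ over $\pi(x)$ is $xH\cap\overline{U}=x\,(H\cap x^{-1}\overline{U})$. Since $x\in\overline{U}=\overline{U}^{-1}$ we have $x^{-1}\overline{U}\subseteq\overline{U}\,\overline{U}\subseteq W$, so $H\cap x^{-1}\overline{U}\subseteq H\cap W\subseteq F$; being $H$ intersected with the closed set $x^{-1}\overline{U}$, it is closed in $F$, hence compact, and left translation by $x$ is a homeomorphism, so the fibre is compact.

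The heart of the matter — and the step I expect to be the only genuine obstacle — is to show that $\pi|_{\overline{U}}$ is a \emph{closed} map; equivalently, that $CH$ is closed in $G$ for every closed $C\subseteq\overline{U}$. Let $g\in\overline{CH}$ and take a net $(c_\alpha h_\alpha)$ with $c_\alpha\in C\subseteq\overline{U}$, $h_\alpha\in H$ and $c_\alpha h_\alpha\to g$. The key is a ``Cauchy'' estimate: since $c_\alpha h_\alpha\to g$, also $(c_\alpha h_\alpha)(c_\beta h_\beta)^{-1}=c_\alpha(h_\alpha h_\beta^{-1})c_\beta^{-1}\to e$, so for all sufficiently large $\alpha,\beta$ this element lies in $U$, whence $h_\alpha h_\beta^{-1}\in c_\alpha^{-1}U\,c_\beta\subseteq\overline{U}\,U\,\overline{U}\subseteq W$; as $h_\alpha h_\beta^{-1}\in H$ this gives $h_\alpha h_\beta^{-1}\in W\cap H\subseteq F$. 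Fixing one such index $\beta_0$, the tail of $(h_\alpha)$ lies in the compact set $F h_{\beta_0}$, so after passing to a subnet $h_\alpha\to h_\ast$ with $h_\ast\in F h_{\beta_0}\subseteq H$. Then $c_\alpha=(c_\alpha h_\alpha)h_\alpha^{-1}\to g h_\ast^{-1}$ along this subnet, and $C$ being closed forces $g h_\ast^{-1}\in C$, i.e. $g\in C h_\ast\subseteq CH$. Hence $CH$ is closed. Taking $C=\overline{U}$ shows $\pi(\overline{U})$ is closed in $G/H$; and for arbitrary closed $C\subseteq\overline{U}$ the image $\pi(C)$ is closed in $G/H$ because $\pi^{-1}(\pi(C))=CH$ is closed, so $\pi|_{\overline{U}}$ is a continuous closed surjection of $\overline{U}$ onto $\pi(\overline{U})$ with compact fibres, that is, a perfect mapping. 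The only delicate point is the \emph{uniform} choice of $U$: one needs the triple product $\overline{U}^{3}$ (rather than $\overline{U}$) inside $W$ precisely so that the Cauchy estimate lands $h_\alpha h_\beta^{-1}$ in the compact set $F$ no matter where $g$ lies in $G$.
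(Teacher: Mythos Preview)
Your argument is correct, and the setup coincides with the paper's: both begin by fixing an open $W\ni e$ with $\overline{W}\cap H$ compact and then a symmetric open $U$ with $\overline{U}^{\,3}\subseteq W$, and both verify compactness of the fibres of $\pi|_{\overline{U}}$ by the same translation trick. The divergence is in the closedness step. The paper (in the gyrogroup generality of its Theorems~4.1 and~4.2, which specialise to the present statement) argues pointwise: given closed $M\subseteq P=\overline{U}$ and $a\in P$ with $\pi(a)\notin\pi(M)$, it observes that $(aH)\cap\overline{P\,P}$ is compact and disjoint from $M$, separates them by an open $W_0\ni e$ with $W_0\bigl((aH)\cap\overline{P\,P}\bigr)\cap M=\emptyset$, and checks that $\pi(W_0 a)$ misses $\pi(M)$. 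This only yields that $\pi|_{P}$ is closed \emph{onto} $\pi(P)$, so the paper needs a second step---using the regularity of $G/H$---to shrink to a smaller $U$ with $\pi(\overline{U})$ closed in $G/H$.

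Your route is a net-based ``Cauchy'' argument: from $c_\alpha h_\alpha\to g$ you extract $h_\alpha h_\beta^{-1}\in\overline{U}\,U\,\overline{U}\cap H\subseteq F$ for large $\alpha,\beta$, trap a tail of $(h_\alpha)$ in a compact translate of $F$, pass to a subnet, and conclude $g\in CH$. This shows directly that $CH$ is closed in $G$ for every closed $C\subseteq\overline{U}$, so $\pi(\overline{U})$ is closed in $G/H$ without any further shrinking. Your approach is thus slightly more economical in the group case and makes transparent why the triple-product condition is exactly what is needed; the paper's compactness-separation argument, on the other hand, is the one that generalises cleanly to the non-associative gyrogroup setting treated there.
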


Also, this result leads to interesting conclusions on how
certain topological properties of $G$ depend on the properties of $G/H$, when $H$ is
locally compact \cite{ARR}.

In \cite{Ung}, Ungar studies a parametrization of the Lorentz transformation group.
This leads to the formation of gyrogroup theory, a rich subject in mathematics (among others, the interested reader can consult \cite{AbW,Ferr}). Loosely speaking, a gyrogroup (see Definition \ref{Def:gyr}) is a group-like structure in which the associative law fails to satisfy. Recently, as a generalization of topological groups Atiponrat \cite{Atip} defined the concept of topological gyrogroups,
which is a gyrogroup with a topology such that its binary operation is jointly continuous and the operation of taking the inverse is continuous. Some basic properties of topological gyrogroups are studied by Atiponrat \cite{Atip}. Specially, Atiponrat \cite{Atip} discovered that for a topological gyrogroup, $T_0$ and $T_3$ are equivalent. It is worth noting that Cai, Lin and He in \cite{Cai} proved that every Hausdorff first countable topological gyrogroup is metrizable.

As a generalization of topological groups, it is natural to consider the quotients of topological gyrogroups.
In \cite{Bao1, Bao2}, Bao and Lin considered when the admissible $L$-subgyrogroup $H$ of a strongly topological gyrogroup $G$ with some topological properties,
such as locally compact, submetrizable and so on, the natural quotient mapping has some nice topological properties.
The purpose of this paper is to investigate the quotients of topological gyrogroups. Some results in topological groups are improved.

The paper is organized as follows: In Section 2, we mainly introduce the related concepts and conclusions which are required in this article. In Section 3, we introduce the concept of strongly $L$-subgyrogroups and give some topological properties of the quotient spaces with respect to strongly $L$-subgyrogroups of topological gyrogroups are studied. We mainly show that: Let $(G, \tau,\oplus)$ be a topological gyrogroup and $H$ a closed strongly $L$-subgyrogroup of $G$;
then the natural homomorphism $\pi$ from a topological gyrogroup $G$ to its quotient topology on $G/H$ is an open and continuous mapping, and $G/H$ is a homogeneous $T_1$-space(see Theorem \ref{the3.7}). In Section 4, the quotients with respect to locally compact strongly $L$-subgyrogroups of topological groups are studied.
It is shown that if $G$ is a topological gyrogroup, $H$ a
locally compact strongly $L$-subgyrogroup of $G$, and $\pi: G\rightarrow G/H$ is the natural quotient mapping of $G$
onto the quotient space $G/H$, then there exists an open neighbourhood $U$ of the neutral
element $0$ such that $\pi(\overline{U})$ is closed in $G/H$ and the restriction of $\pi$ to $\overline{U}$ is a perfect mapping
of $\overline{U}$ onto the subspace $\pi(\overline{U})$(see Theorem \ref{the4.2}).

All spaces in this paper satisfy the $T_2$-separation axiom. All mappings are continuous and onto.
The reader may consult \cite{Arha, Eng} for unstated notations and terminology.
\section{Definitions and preliminaries}

Let $G$ be a nonempty set, and let $\oplus  : G  \times G \rightarrow G $ be a binary operation on $G $. Then the pair $(G, \oplus)$ is
called a {\it groupoid.}  A function $f$ from a groupoid $(G_1, \oplus_1)$ to a groupoid $(G_2, \oplus_2)$ is said to be
a groupoid homomorphism if $f(x_1\oplus_1 x_2)=f(x_1)\oplus_2 f(x_2)$ for any elements $x_1, x_2 \in G_1$.  In addition, a bijective
groupoid homomorphism from a groupoid $(G, \oplus)$ to itself will be called a groupoid automorphism. We will write $Aut (G, \oplus)$ for the set of all automorphisms of a groupoid $(G, \oplus)$.
\begin{definition}\cite[Definition 2.7]{Ung}\label{Def:gyr}
 Let $(G, \oplus)$ be a nonempty groupoid. We say that $(G, \oplus)$ or just $G$
(when it is clear from the context) is a gyrogroup if the followings hold:
\begin{enumerate}
\item[($G1$)] There is an identity element $e \in G$ such that
$$e\oplus x=x=x\oplus e \text{~~~~~for all~~}x\in G.$$
\item[($G2$)] For each $x \in G $, there exists an {\it inverse element}  $\ominus x \in G$ such that
$$\ominus x\oplus x=e=x\oplus(\ominus x).$$
\item[($G3$)] For any $x, y \in G $, there exists an {\it gyroautomorphism} $\text{gyr}[x, y] \in Aut(G,  \oplus)$ such that
$$x\oplus (y\oplus z)=(x\oplus y)\oplus \text{gyr}[x, y](z)$$ for all $z \in G$;
\item[($G4$)] For any $x, y \in G$, $\text{gyr}[x \oplus y, y]=\text{gyr}[x, y]$.
\end{enumerate}
\end{definition}

\begin{definition}\cite{Suk3}
Let $(G, \oplus)$ be a gyrogroup. A nonempty subset $H$ of $G$ is called a subgyrogroup, denoted
by $H\leq G$, if the following statements hold:
\begin{enumerate}
\item[(1)] The restriction $\oplus|_{H\times H}$ is a binary operation on $H$, i.e. $(H, \oplus|_{H\times H})$ is a groupoid;
\item[(2)] For any $x, y\in H$, the restriction of $\text{gyr}[x, y]$ to $H$, $\text{gyr}[x, y]|_H: H \rightarrow\text{gyr}[x, y](H)$, is a bijective
homomorphism; and
\item[(3)] $(H, \oplus|_{H\times H})$ is a gyrogroup.
\end{enumerate}
\end{definition}

Furthermore, a subgyrogroup $H$ of $G$ is said to be an {\it $L$-subgyrogroup} \cite{Suk3}, denoted by $H\leq_L G$,
if $\text{gyr}[a, h](H) =H$ for all $a\in G$ and $h\in H$.

In this paper, $\text{gyr}[a,b]V$ denotes $\{\text{gyr}[a,b]v: v\in V\}$.

The following Proposition \ref{Pro:gyr} below summarizes some algebraic properties of gyrogroups.

\begin{proposition}\cite{Ung, Ung1}\label{Pro:gyr}
Let $(G,\oplus)$ be a gyrogroup and $a,b,c\in G$. Then
\begin{enumerate}
\item[(1)] $\ominus(\ominus a)=a$ \hfill{Involution of inversion}
\item[(2)] $\ominus a\oplus(a\oplus b)=b$ \hfill{Left cancellation law}
\item[(3)] \text{gyr}$[a,b](c)=\ominus(a\oplus b)\oplus(a\oplus(b\oplus c))$ \hfill{Gyrator identity}
\item[(4)] $\ominus(a\oplus b)=\text{gyr}[a,b](\ominus b\ominus a)$\hfill{\text{cf.~}$(ab)^{-1}=b^{-1}a^{-1}$}
\item[(5)] $(\ominus a\oplus b)\oplus \text{gyr}[\ominus a,b](\ominus b\oplus c)=\ominus a\oplus c$ \hfill{\text{cf.~}$(a^{-1}b)(b^{-1}c)=a^{-1}c$}
\item[(6)] $\text{gyr}[a,b]=\text{gyr}[\ominus b,\ominus a]$ \hfill{Even property}
\item[(7)] $\text{gyr}[a,b]=\text{gyr}^{-1}[b,a], \text{the inverse of gyr}[b,a]$ \hfill{Inversive symmetry}
\end{enumerate}
\end{proposition}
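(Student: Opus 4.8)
The plan is to derive all seven identities directly from the axioms (G1)--(G4), using only the structural fact that each $\text{gyr}[x,y]$ is a \emph{bijective} homomorphism of $(G,\oplus)$ (together, of course, with the identity and inverse laws). The single step that needs an idea, rather than bookkeeping, is to get a cancellation law off the ground, since no cancellation is postulated. First I would show that every left translation $L_a\colon z\mapsto a\oplus z$ is injective: if $a\oplus z_1=a\oplus z_2$, then for each $i$ we have $\text{gyr}[\ominus a,a](z_i)=(\ominus a\oplus a)\oplus\text{gyr}[\ominus a,a](z_i)=\ominus a\oplus(a\oplus z_i)$ by (G2) and (G3), so $\text{gyr}[\ominus a,a](z_1)=\text{gyr}[\ominus a,a](z_2)$, whence $z_1=z_2$ since the gyration is injective. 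Putting $x=e$ and then $y=e$ into (G3) and using the identity laws gives $b\oplus z=b\oplus\text{gyr}[e,b](z)$ and $a\oplus z=a\oplus\text{gyr}[a,e](z)$; cancelling on the left yields $\text{gyr}[e,b]=\text{gyr}[a,e]=\mathrm{id}$. Then (G4) with $x=\ominus a,\ y=a$ gives $\text{gyr}[\ominus a,a]=\text{gyr}[e,a]=\mathrm{id}$, so (G3) with the same substitution collapses to $\ominus a\oplus(a\oplus b)=\text{gyr}[\ominus a,a](b)=b$, which is item~(2).

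Injectivity of $L_a$ also forces inverses to be unique: if $y\oplus u=e$ then $u=\ominus y\oplus(y\oplus u)=\ominus y\oplus e=\ominus y$ by~(2); since $\ominus a\oplus a=e$, this gives $a=\ominus(\ominus a)$, i.e. item~(1). Replacing $a$ by $\ominus a$ in~(2) and using~(1) yields the companion law $a\oplus(\ominus a\oplus b)=b$. For the gyrator identity~(3) one left-cancels $a\oplus b$ in the left gyroassociative law $a\oplus(b\oplus c)=(a\oplus b)\oplus\text{gyr}[a,b](c)$.

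Items~(4) and~(5) are then substitution exercises. For~(4), set $c=\ominus b\oplus\ominus a$ in~(3); the right-hand side collapses: $\ominus(a\oplus b)\oplus\bigl(a\oplus(b\oplus(\ominus b\oplus\ominus a))\bigr)=\ominus(a\oplus b)\oplus(a\oplus\ominus a)=\ominus(a\oplus b)$, using the companion law and (G2). For~(5), apply (G3) with $x=\ominus a$, $y=b$, $z=\ominus b\oplus c$; since $b\oplus(\ominus b\oplus c)=c$, the left-hand side becomes $\ominus a\oplus c$, which is exactly the asserted identity.

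The even property~(6) and inversive symmetry~(7) are the delicate ones, and this is where the real work lies. The strategy is to leverage the left loop property (G4) together with the gyrator identity and the automorphism property (an automorphism of $(G,\oplus)$ fixes $e$, hence commutes with $\ominus$): one first derives the \emph{right loop property} $\text{gyr}[a,b\oplus a]=\text{gyr}[a,b]$ and a \emph{right gyroassociative law}, after which a short computation with~(3) and the loop properties yields $\text{gyr}[a,b]\circ\text{gyr}[b,a]=\mathrm{id}$, i.e.~(7); the even property~(6) then follows by feeding~(7) back into the gyrosum-inversion law~(4) together with one further application of the loop property. I expect the two genuine obstacles to be (i) the bootstrap above, where cancellation must be extracted from the mere bijectivity of a gyration rather than assumed, and (ii) the bookkeeping in establishing the right loop property, where one must keep careful track of which gyration is produced each time two nested instances of (G3) are rearranged; everything else reduces to substitution plus cancellation.
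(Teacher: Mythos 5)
The paper offers no proof of this proposition at all: it is quoted from Ungar's books \cite{Ung,Ung1} as a known result, so there is no in-paper argument to compare against. On its own terms, your treatment of items (1)--(5) is correct and complete, and the bootstrap is exactly right: extracting injectivity of $L_a$ from bijectivity of $\text{gyr}[\ominus a,a]$, then getting $\text{gyr}[e,b]=\text{gyr}[a,e]=\mathrm{id}$, then using (G4) to collapse $\text{gyr}[\ominus a,a]$ to the identity, after which (1)--(5) are routine substitutions. This is the standard development.

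The gap is in (6) and (7), which you correctly identify as ``where the real work lies'' but then only sketch, and the sketched route does not work as described. The problem is that the right loop property is not an easier stepping stone to (7): given the tools available at that stage it is \emph{equivalent} to (7). Concretely, from left gyroassociativity, left cancellation, bijectivity of gyrations and (G4) one gets
$\text{gyr}[a,b]^{-1}=\text{gyr}[\ominus a,\,a\oplus b]=\text{gyr}[b,\,a\oplus b]$
(the first equality by comparing the two expansions of $\ominus a\oplus((a\oplus b)\oplus z)$, the second by applying (G4) to the pair $(\ominus a,\,a\oplus b)$). Hence (7) holds if and only if $\text{gyr}[b,a\oplus b]=\text{gyr}[b,a]$, i.e.\ if and only if the right loop property holds; proposing to ``first derive the right loop property'' merely relocates the entire difficulty without resolving it. The standard proofs break this circle by a genuinely new input --- either solving the second basic gyrogroup equation $x\oplus a=b$ via the coaddition $\boxplus$ (so that right translations are surjective, after which the displayed identity plus (G4) yields (7) at once), or the gyrosemidirect-product argument --- and your sketch contains neither. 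Separately, your route to (6) is also insufficient as stated: feeding (7) into the gyrosum inversion law (4) for the pairs $(a,b)$ and $(\ominus b,\ominus a)$ only shows that $\text{gyr}[b,a]$ and $\text{gyr}[\ominus b,\ominus a]$ agree at the single point $a\oplus b$, which does not give equality of the two automorphisms without a further argument. So (1)--(5) stand, but (6) and (7) are not proved.
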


\begin{definition}\cite[Definition 1]{Atip}
A triple $( G, \tau,  \oplus)$ is called a {\it topological gyrogroup} if and only if
\begin{enumerate}
\item[(1)] $( G, \tau)$ is a topological space;
\item[(2)] $( G, \oplus)$ is a gyrogroup;
\item[(3)] The binary operation  $\oplus: G  \times G  \rightarrow G$ is continuous where $G \times G$ is endowed with the product topology
and the operation of taking the inverse $\ominus(\cdot ) : G  \rightarrow G $, i.e. $x \rightarrow \ominus x$, is continuous.
\end{enumerate}
\end{definition}

\begin{definition}\cite[Definition 2.9]{Ung1}
Let $(G,\oplus)$ be a gyrogroup with gyrogroup operation (or,
addition) $\oplus$. The gyrogroup cooperation (or, coaddition) $\boxplus$ is a second
binary operation in $G$ given by the equation
$$(\divideontimes)~~~~a\boxplus b=a\oplus \text{gyr}[a,\ominus b]b$$ for all $a, b\in G$.
The groupoid $(G, \boxplus)$ is called a cogyrogroup, and is said to
be the cogyrogroup associated with the gyrogroup $(G, \oplus)$.

Replacing $b$ by $\ominus b$ in $(\divideontimes)$, along with $(\divideontimes)$ we have the identity
$$a\boxminus b=a\ominus \text{gyr}[a,b]b$$ for all $a, b\in G$, where we use the obvious notation, $a\boxminus b = a\boxplus(\ominus b)$.
\end{definition}

Let $(G,\oplus)$ be a gyrogroup, $x\in G$ and $A,B\subseteq G$.
We write $A\oplus B=\{a\oplus b:a\in A, b\in B\}$, $x\oplus A = \{x\oplus a : a \in A\}$
and $A\oplus x = \{a\oplus x : a \in A\}$.

\begin{definition}\cite{Ung}
Let $(G,\oplus)$ be a gyrogroup, and let $x\in G$. We define the {\it~left~gyrotranslation~} by $x$ to be
the function $L_x: G\rightarrow G$ such that $L_x(y) =x \oplus y$ for any $y \in G$. In addition, the {\it~right~gyrotranslation} by $x$ is defined to be the function $R_x(y)= y\oplus x$ for any $y\in G$.
%Clearly, all left and right gyrotranslations are homeomorphic mappings from $G$ onto itself.
\end{definition}

\begin{proposition}\cite[Proposition 3, Lemma 4 and Corollary 5]{Atip}\label{Pro2.4}
Let $(G,\oplus)$ be a topological gyrogroup, $x\in G$ and $A,B\subseteq G$.
\begin{enumerate}
\item[(1)] The inverse mapping $inv:G\rightarrow G$, where $inv(x)=\ominus x$ for every $x\in G$, is a homeomorphism;
\item[(2)] The left translation $L_{x}: G \rightarrow G$, where $L_{x}(y) = x \oplus y$ for every $y\in G$, is a
homeomorphism;
\item[(3)] The right translation $R_{x}: G \rightarrow G$, where $R_{x}(y) = y\oplus x$ for every $y\in G$, is a
homeomorphism;
\item[(4)] Let $x, y\in G$. Then $\text{gyr}[x,y]:G\rightarrow G$ is a homeomorphism;
\item[(5)] If $A$ is open in $G$, then $x\oplus A$, $A\oplus x$, $A\oplus B$, $B\oplus A$ and $\ominus A$ are all open in $G$.
\end{enumerate}
\end{proposition}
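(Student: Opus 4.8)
The plan is to prove the five assertions by deducing continuity of each map from the axioms of a topological gyrogroup and bijectivity from the algebraic identities collected in Proposition~\ref{Pro:gyr}; the items are essentially independent. For (1), the map $inv$ is continuous by condition~(3) in the definition of a topological gyrogroup, and Proposition~\ref{Pro:gyr}(1) gives $inv\circ inv=\mathrm{id}_G$, so $inv$ is its own two-sided inverse; being a continuous bijection with continuous inverse, it is a homeomorphism. For (2), I would note that $L_x$ factors as $G\to G\times G\to G$, $y\mapsto(x,y)\mapsto x\oplus y$, the first map being continuous into the product and the second continuous by hypothesis, so $L_x$ is continuous; the left cancellation law (Proposition~\ref{Pro:gyr}(2)) gives $L_{\ominus x}\circ L_x=\mathrm{id}_G$, and applying the same law with $\ominus x$ in place of $x$ together with involution of inversion gives $L_x\circ L_{\ominus x}=\mathrm{id}_G$, so $L_x$ is a homeomorphism with inverse $L_{\ominus x}$. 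For (4), the gyrator identity (Proposition~\ref{Pro:gyr}(3)) exhibits $c\mapsto\text{gyr}[x,y](c)=\ominus(x\oplus y)\oplus\bigl(x\oplus(y\oplus c)\bigr)$ as a composition of left translations and $\oplus$, hence continuous; since $\text{gyr}[x,y]$ is an automorphism it is bijective, and its inverse $\text{gyr}[y,x]$ (Proposition~\ref{Pro:gyr}(7)) is continuous by the same reasoning, so it too is a homeomorphism.

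The delicate item is (3). Continuity of $R_x$ is immediate, exactly as for $L_x$, by factoring through $y\mapsto(y,x)$. The difficulty is that there is no ``right cancellation law'' as simple as the left one; instead one must pass through the gyrogroup cooperation. The right cancellation laws of gyrogroup theory give that $R_x$ is a bijection with $R_x^{-1}(z)=z\boxminus x$. Expanding the cooperation via $z\boxminus x=z\oplus\text{gyr}[z,x](\ominus x)$ and then applying the gyrator identity with $c=\ominus x$ (so that the term $x\oplus(\ominus x)$ collapses to $e$) yields the closed form $R_x^{-1}(z)=z\oplus\bigl(\ominus(z\oplus x)\oplus z\bigr)$, which is manifestly continuous in $z$. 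Hence $R_x$ is a homeomorphism. I expect this step---verifying surjectivity of $R_x$ and producing a continuous formula for $R_x^{-1}$---to be the main obstacle; it can be handled either by quoting the right cancellation laws from Ungar's work \cite{Ung1} or by a direct computation with the identities in Proposition~\ref{Pro:gyr}.

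Finally, (5) follows formally from (1)--(3): a homeomorphism is in particular an open map, so $x\oplus A=L_x(A)$, $A\oplus x=R_x(A)$ and $\ominus A=inv(A)$ are open whenever $A$ is; and for the set products one writes $A\oplus B=\bigcup_{b\in B}R_b(A)$ and $B\oplus A=\bigcup_{b\in B}L_b(A)$, each a union of open sets and therefore open. The same argument shows $A\oplus B$ and $B\oplus A$ are open if merely one of $A$, $B$ is open, by taking the union over the other variable.
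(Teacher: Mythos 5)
Your proof is correct. The paper states this proposition only as a citation of \cite{Atip} and gives no proof of its own, and your argument --- continuity of each map from the axioms, inverses $L_{\ominus x}$ and $\text{gyr}[y,x]$ via left cancellation and inversive symmetry, $R_x^{-1}(z)=z\boxminus x$ rendered continuous through the gyrator identity (with the right cancellation laws quoted from Ungar), and item (5) as unions of open translates --- is exactly the standard argument from that reference.
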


Let $X$ be a topological space and let $A\subseteq X$. We denote the closure $A$ in $X$ by
$\overline{A}$.

\begin{corollary}\label{cor2.8}
Let $G$ be a topological gyrogroup, and $A$ be a subset of $G$. Then $\overline{\ominus A} = \ominus\overline{A}$.
\end{corollary}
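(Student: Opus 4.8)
The statement to prove is Corollary~\ref{cor2.8}: for a topological gyrogroup $G$ and any subset $A$, $\overline{\ominus A} = \ominus\overline{A}$.

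The plan is to use Proposition~\ref{Pro2.4}(1), which tells us that the inverse map $inv\colon G\to G$, $inv(x)=\ominus x$, is a homeomorphism. The key point is that homeomorphisms commute with the closure operator: if $f\colon X\to Y$ is a homeomorphism and $A\subseteq X$, then $f(\overline{A})=\overline{f(A)}$. This is a standard fact, proved by noting that a continuous map satisfies $f(\overline{A})\subseteq\overline{f(A)}$, and applying this to both $f$ and $f^{-1}$ (which is also continuous) yields the reverse inclusion.

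Concretely, I would first recall that $inv$ is a bijection with $inv=inv^{-1}$ (since $\ominus(\ominus x)=x$ by Proposition~\ref{Pro:gyr}(1), so $inv$ is an involution), and observe that $inv(A)=\ominus A$ by definition of the notation $\ominus A=\{\ominus a : a\in A\}$. Since $inv$ is continuous, $inv(\overline{A})\subseteq\overline{inv(A)}$, i.e. $\ominus\overline{A}\subseteq\overline{\ominus A}$. Applying the same inclusion to the set $\ominus A$ in place of $A$ gives $\ominus\overline{\ominus A}\subseteq\overline{\ominus\ominus A}=\overline{A}$; now applying $inv$ (a homeomorphism, hence in particular it preserves inclusions and is its own inverse) to both sides yields $\overline{\ominus A}\subseteq\ominus\overline{A}$. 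Combining the two inclusions gives the desired equality.

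There is essentially no obstacle here: the only subtlety is the bookkeeping with the involution property $\ominus\ominus A=A$, which is immediate from Proposition~\ref{Pro:gyr}(1). One could also phrase the whole argument in one line by invoking the general lemma ``a homeomorphism $f$ satisfies $f(\overline{A})=\overline{f(A)}$'' directly with $f=inv$ and $\overline{inv(A)}=\overline{\ominus A}$, $inv(\overline{A})=\ominus\overline{A}$. I would present it in this compact form, citing Proposition~\ref{Pro2.4}(1) for the homeomorphism property.
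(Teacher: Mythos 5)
Your proposal is correct and follows exactly the paper's argument: both invoke Proposition~\ref{Pro2.4}(1) to see that $inv$ is a homeomorphism and then apply the standard fact that a homeomorphism commutes with closure. The extra detail you give (spelling out the two inclusions via the involution $\ominus\ominus A = A$) is just an expansion of the same one-line proof.
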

\begin{proof}
It is known that for every homeomorphism $f: G \rightarrow G$, $\overline{f(A)}= f(\overline{A})$ is true.
So we can get $\overline{\ominus A} = \ominus\overline{A}$ by Proposition \ref{Pro2.4} (1).
\end{proof}

\begin{corollary}\label{cor2.9}
Let $G$ be a topological gyrogroup, $A$ be a subset of $G$ and $g, h$ be points of $G$. Then $\text{gyr}[g,h]\overline{A}=\overline{\text{gyr}[g,h]A}$.
\end{corollary}
\begin{proof}
It is known that for every homeomorphism $f: G \rightarrow G$, $f(\overline{A})=\overline{f(A)}$ is true.
So we can get $\text{gyr}[g,h]\overline{A}=\overline{\text{gyr}[g,h]A}$ by Proposition \ref{Pro2.4} (4).
\end{proof}

\begin{proposition}\label{the2.9}
If $H$ is a subgyrogroup ($L$-subgyrogroup) of topological gyrogroup $G$, then $\overline{H}$ is a
subgyrogroup ($L$-subgyrogroup) of $G$.
\end{proposition}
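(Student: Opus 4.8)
The plan is to verify the three defining conditions of a subgyrogroup for $\overline{H}$, and then the $L$-condition separately. First I would establish closure of $\overline{H}$ under $\oplus$: since $H\oplus H\subseteq H$, continuity of the binary operation gives $\overline{H}\oplus\overline{H}\subseteq\overline{H\oplus H}\subseteq\overline{H}$ — here I use the standard fact that for a continuous map $m\colon G\times G\to G$ one has $m(\overline{A}\times\overline{B})\subseteq\overline{m(A\times B)}$, which applies because the closure of $A\times B$ in the product is $\overline{A}\times\overline{B}$. Next, closure under inversion: from $\ominus H\subseteq H$ (in fact $\ominus H=H$ since $H$ is a subgyrogroup) and Corollary \ref{cor2.8}, $\ominus\overline{H}=\overline{\ominus H}\subseteq\overline{H}$. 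The identity $e$ lies in $H\subseteq\overline{H}$. This already shows $(\overline{H},\oplus|_{\overline{H}\times\overline{H}})$ is a groupoid containing $e$ and closed under inversion.

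Second, I would handle condition (2), that each $\mathrm{gyr}[x,y]$ restricts to a bijective homomorphism on $\overline{H}$ for $x,y\in\overline{H}$. The homomorphism property is automatic since $\mathrm{gyr}[x,y]\in\mathrm{Aut}(G,\oplus)$; the real point is that $\mathrm{gyr}[x,y](\overline{H})\subseteq\overline{H}$, and more generally that $\overline{H}$ is a gyrogroup under the restricted operation (condition (3)). The cleanest route is the gyrator identity, Proposition \ref{Pro:gyr}(3): $\mathrm{gyr}[a,b](c)=\ominus(a\oplus b)\oplus(a\oplus(b\oplus c))$. For fixed $a,b\in\overline{H}$ this exhibits $\mathrm{gyr}[a,b]$ restricted to $G$ as a composition of left gyrotranslations and an inversion, so applying it to $c\in\overline{H}$ and using the already-proven facts that $\overline{H}$ is closed under $\oplus$ and $\ominus$, we get $\mathrm{gyr}[a,b](c)\in\overline{H}$; injectivity is inherited from $\mathrm{gyr}[a,b]$ being an automorphism of $G$. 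Alternatively, Corollary \ref{cor2.9} gives $\mathrm{gyr}[g,h]\overline{A}=\overline{\mathrm{gyr}[g,h]A}$ directly, but one must be careful that $g,h$ there range over $G$ while here they should be allowed in $\overline{H}$ — so I would argue via the gyrator identity instead, which only needs membership of the endpoints in $\overline{H}$. Granting closure under $\oplus$, $\ominus$, and $\mathrm{gyr}$, the gyrogroup axioms $(G1)$–$(G4)$ for $\overline{H}$ are inherited verbatim from $G$, since they are universally quantified identities; this settles condition (3) and completes the argument that $\overline{H}\leq G$.

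Third, for the $L$-subgyrogroup claim, suppose $H\leq_L G$, i.e. $\mathrm{gyr}[a,h](H)=H$ for all $a\in G$, $h\in H$. Fix $a\in G$ and $h\in\overline{H}$. Using the gyrator identity with the homeomorphism $\mathrm{gyr}[a,h]$ of $G$ (Proposition \ref{Pro2.4}(4)), $\mathrm{gyr}[a,h](\overline{H})=\overline{\mathrm{gyr}[a,h](H)}$ when $h\in H$, which equals $\overline{H}$; the remaining subtlety is extending from $h\in H$ to $h\in\overline{H}$, which I would do by noting that the map $(a,h)\mapsto \mathrm{gyr}[a,h]$ behaves continuously enough — more precisely, for fixed $a$, using property $(G4)$ and left cancellation one reduces $\mathrm{gyr}[a,h]$ for $h\in\overline H$ to gyroautomorphisms already known to preserve $\overline H$, or one invokes that $\mathrm{gyr}[a,h](\overline H)=\overline{\mathrm{gyr}[a,h](H)}$ holds by Corollary \ref{cor2.9} for all $h\in G$ and then observes $\mathrm{gyr}[a,h](H)\subseteq\overline H$ for $h\in\overline H$ by a density/continuity argument together with $\mathrm{gyr}[a,h](H)=H$ for $h$ in the dense subset $H$.

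I expect the main obstacle to be exactly this last point: the $L$-condition $\mathrm{gyr}[a,h](H)=H$ is stated only for $h\in H$, and promoting it to $h\in\overline{H}$ requires either a genuine continuity argument for the assignment $h\mapsto\mathrm{gyr}[a,h]$ (which is not among the listed properties) or a clever use of the gyrogroup identities $(G3)$, $(G4)$ to rewrite $\mathrm{gyr}[a,h]$ in terms of data lying in $\overline{H}$. The subgyrogroup part, by contrast, is routine once one observes that all the relevant set-operations ($\oplus$, $\ominus$, $\mathrm{gyr}$) commute with closure via Proposition \ref{Pro2.4} and Corollaries \ref{cor2.8}, \ref{cor2.9}.
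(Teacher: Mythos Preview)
Your subgyrogroup argument follows the same route as the paper: show $\overline{H}\oplus\overline{H}\subseteq\overline{H}$ via continuity of $\oplus$, and $\ominus\overline{H}=\overline{H}$ via Corollary~\ref{cor2.8}. The paper stops there and declares $\overline{H}$ a subgyrogroup, implicitly using the standard criterion that a nonempty subset closed under $\oplus$ and $\ominus$ is automatically a subgyrogroup (precisely because the gyrator identity then forces $\mathrm{gyr}[a,b](\overline{H})\subseteq\overline{H}$ for $a,b\in\overline{H}$). You make this step explicit, which is fine and arguably cleaner.

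Your worry about the $L$-condition is well placed --- in fact the paper's own proof has exactly the gap you anticipate. The paper only verifies $\mathrm{gyr}[a,h](\overline{H})=\overline{H}$ for $a\in G$ and $h\in H$, by applying Corollary~\ref{cor2.9} to the equality $\mathrm{gyr}[a,h](H)=H$; it then concludes without passing to $h\in\overline{H}$. Your proposed fix is the right one: the gyrator identity $\mathrm{gyr}[a,h](c)=\ominus(a\oplus h)\oplus(a\oplus(h\oplus c))$ exhibits $(h,c)\mapsto\mathrm{gyr}[a,h](c)$ as a continuous map $G\times G\to G$ carrying $H\times H$ into $H$, hence $\overline{H}\times\overline{H}$ into $\overline{H}$. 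This gives $\mathrm{gyr}[a,h](\overline{H})\subseteq\overline{H}$ for $h\in\overline{H}$; the reverse inclusion follows by applying the same argument to $\mathrm{gyr}[h,a]=\mathrm{gyr}^{-1}[a,h]$ (using Proposition~\ref{Pro:gyr}(7) and the fact that $\mathrm{gyr}[h,a](H)=H$ for $h\in H$, $a\in G$). So your proposal is correct and, on this point, more complete than the paper's argument.
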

\begin{proof}
We show first that, for all $A, B \subseteq G$, we have $\overline{A}\oplus\overline{B}\subseteq \overline{A\oplus B}$.
 By the definition of a topological gyrogroup, the
mapping $op_2$ is continuous $G\times G \rightarrow G$.
 The set $\overline{A}\oplus\overline{B}$ can be represented in the form $op_2(\overline{A}\times\overline{B})$.
  In the product space $G\times G$ we have that
$\overline{A}\times \overline{B} = \overline{A\times B}$, and it follows, by continuity of the mapping $op_2$, that $op_2(\overline{A\times B})
\subseteq \overline{op_2(A\times B)}=\overline{A\oplus B}$.
Since $op_2(\overline{A\times B})=\overline{A}\oplus \overline{B}$,  we have shown that $\overline{A}\oplus \overline{B} \subseteq \overline{A\oplus B}$.
Let $H$ be a subgyrogroup of $G$. Then $H\oplus H = H$ and $H^{-1} = H$.
By the foregoing, we
have $\overline{H}\oplus \overline{H}\subseteq \overline{H\oplus H}$, and thus $\overline{H}\oplus \overline{H}\subseteq \overline{H}$;
this means that the set $H$ is closed under the
operation of $G$. On the other hand, Corollary \ref{cor2.8} shows that $\overline{\ominus H} = \ominus\overline{H}$; from
this it follows that $\ominus \overline{H}= \overline{H}$, and thus the set $H$ is also closed under taking inverses. We
have shown that $H$ is a subgyrogroup of $G$.

We assume that the subgyrogroup $H$ is an $L$-subgyrogroup. Then, for all $a\in G$ and $h\in H$ we have that $\text{gyr}[a, h](H)=H$, and hence, by Corollary \ref{cor2.9}, that $\text{gyr}[a,h]\overline{H}=\overline{\text{gyr}[a,h]H}=\overline{H}$.
As a consequence,
$\overline{H}$ is an $L$-subgyrogroup.
\end{proof}

\begin{proposition}\label{the2.10}
Every open subgyrogroup $H$ of a topological gyrogroup $G$ is
closed in $G$.
\end{proposition}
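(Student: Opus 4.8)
The plan is to adapt the familiar argument used for topological groups: realize $G\setminus H$ as a union of left cosets of $H$, each of which is open because $H$ is open, so that $G\setminus H$ is open and hence $H$ is closed in $G$.

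Concretely, I would fix an arbitrary $x\in G\setminus H$ and consider the set $x\oplus H$. Since $e\in H$ (the identity of the subgyrogroup $H$ coincides with the identity of $G$), we have $x=x\oplus e\in x\oplus H$, and $x\oplus H$ is open by Proposition~\ref{Pro2.4}(5). The crucial point is to show that $(x\oplus H)\cap H=\emptyset$. Suppose this fails: pick $z\in(x\oplus H)\cap H$, write $z=x\oplus h_{1}$ with $h_{1}\in H$, and set $h_{2}:=z\in H$. Applying $(G3)$ to the triple $(x,h_{1},\ominus h_{1})$ together with $(G2)$ and $(G1)$ gives $x=x\oplus(h_{1}\oplus(\ominus h_{1}))=(x\oplus h_{1})\oplus\text{gyr}[x,h_{1}](\ominus h_{1})=h_{2}\oplus\text{gyr}[x,h_{1}](\ominus h_{1})$, while $(G4)$ gives $\text{gyr}[x,h_{1}]=\text{gyr}[x\oplus h_{1},h_{1}]=\text{gyr}[h_{2},h_{1}]$. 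Now the gyrator identity (Proposition~\ref{Pro:gyr}(3)) yields $\text{gyr}[h_{2},h_{1}](\ominus h_{1})=\ominus(h_{2}\oplus h_{1})\oplus\bigl(h_{2}\oplus(h_{1}\oplus(\ominus h_{1}))\bigr)=\ominus(h_{2}\oplus h_{1})\oplus h_{2}$, so that $x=h_{2}\oplus\bigl(\ominus(h_{2}\oplus h_{1})\oplus h_{2}\bigr)$. Since $h_{1},h_{2}\in H$ and $H$ is closed under $\oplus$ and under taking inverses, this forces $x\in H$, contradicting the choice of $x$. Hence $x\oplus H\subseteq G\setminus H$.

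As this holds for every $x\in G\setminus H$, we get $G\setminus H=\bigcup_{x\in G\setminus H}(x\oplus H)$, a union of open sets; thus $G\setminus H$ is open and $H$ is closed in $G$. I expect the middle computation to be the only real obstacle: because $\oplus$ is not associative one cannot simply cancel $h_{1}$ in $x\oplus h_{1}=h_{2}$, so one has to pass through $(G3)$, $(G4)$ and the gyrator identity both to express $x$ explicitly and to verify that it remains inside $H$; everything else is routine coset bookkeeping together with the openness of translates in a topological gyrogroup.
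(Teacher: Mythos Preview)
Your argument is correct and follows the same strategy as the paper: show that $G\setminus H$ is open by exhibiting, for each $x\notin H$, an open translate of $H$ containing $x$ and disjoint from $H$. The only difference is which translate is used. The paper takes $(\ominus H)\oplus x$ rather than your $x\oplus H$; then, if $(\ominus h_{1})\oplus x=h_{2}$ for some $h_{1},h_{2}\in H$, applying $h_{1}\oplus(\cdot)$ and left cancellation gives $x=h_{1}\oplus h_{2}\in H$ in one line. Your choice of the left coset $x\oplus H$ forces you to ``right-cancel'' $h_{1}$ in $x\oplus h_{1}=h_{2}$, which is why the detour through $(G4)$ and the gyrator identity is needed. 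Both routes are valid; the paper's is simply shorter.
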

\begin{proof}
From Proposition \ref{the2.9} it follows that $\overline{H}$ is a subgyrogroup. Since $H$ is an open set containing the identity $0$, one can easily show that $\overline{H}\subseteq H\oplus H=H$, which implies that $H$ is
closed in $G$. In fact, take any $x\notin H$. Then $x\notin H\oplus H$. This implies that $(\ominus H)\oplus x\cap H=\emptyset$. Since $H$ is an open set containing the identity $0$ and $G$ is a topological gyrogroup, $(\ominus H)\oplus x$ is an open set containing $x$. This implies that $x\notin \overline{H}$.
\end{proof}

\begin{proposition}\label{the2.12}
If $H$ is a locally compact subgyrogroup of a topological gyrogroup $G$,
then $H$ is closed in $G$.
\end{proposition}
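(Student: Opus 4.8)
The plan is to show that $H$ equals its closure $\overline{H}$ in $G$, by exhibiting $H$ as a subset of $\overline{H}$ that is simultaneously dense and closed. First I would apply Proposition \ref{the2.9} to conclude that $\overline{H}$ is a subgyrogroup of $G$; with the subspace topology $\overline{H}$ is then itself a topological gyrogroup, since the binary operation and the inversion of $G$ restrict to continuous maps on it. By the standing convention that all spaces are $T_2$, the space $\overline{H}$ is Hausdorff, and of course $H$ is dense in $\overline{H}$ and is a subgyrogroup of it.

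The key step is to prove that $H$ is \emph{open} in $\overline{H}$, which rests on the following purely point-set fact: a dense, locally compact subspace $Y$ of a Hausdorff space $X$ is open in $X$. To prove this, fix $y\in Y$ and pick an open set $W$ of $Y$ with $y\in W$ and $\mathrm{cl}_Y(W)$ compact; write $W=U\cap Y$ with $U$ open in $X$. Density of $Y$ gives $U\subseteq \mathrm{cl}_X(U\cap Y)=\mathrm{cl}_X(W)$, while compactness forces $\mathrm{cl}_Y(W)$ to be closed in the Hausdorff space $X$, so that $\mathrm{cl}_X(W)\subseteq \mathrm{cl}_X(\mathrm{cl}_Y(W))=\mathrm{cl}_Y(W)\subseteq Y$. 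Hence $y\in U\subseteq Y$, proving $Y$ open in $X$. Taking $X=\overline{H}$ and $Y=H$ (locally compact by hypothesis) yields that $H$ is open in $\overline{H}$.

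Finally, $H$ is an open subgyrogroup of the topological gyrogroup $\overline{H}$, so Proposition \ref{the2.10} shows $H$ is closed in $\overline{H}$; being also dense in $\overline{H}$, it must coincide with $\overline{H}$, so $H$ is closed in $G$. I expect the openness claim of the second paragraph to be the only real obstacle — it is essentially the topological-group argument transplanted to this setting — while everything else is a direct appeal to Propositions \ref{the2.9} and \ref{the2.10}. A point to watch is which flavour of local compactness is intended: the argument only needs that each point of $H$ has one neighbourhood (in $H$) with compact closure, which is the standard reading in a Hausdorff setting.
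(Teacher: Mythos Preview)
Your proof is correct and follows essentially the same route as the paper's: both pass to the closure $\overline{H}$ (a subgyrogroup by Proposition~\ref{the2.9}), observe that $H$ is open in $\overline{H}$ because a dense locally compact subspace of a Hausdorff space is open, and then conclude via Proposition~\ref{the2.10}. The only difference is that the paper cites Engelking's Theorem~3.3.9 for the openness step, whereas you supply a direct proof of that point-set fact.
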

\begin{proof}
Let $K$ be the closure of $H$ in $G$. Then $K$ is a subgyrogroup of $G$ by Proposition \ref{the2.9}.
Since $H$ is a dense locally compact subspace of $K$, it follows from \cite[Theorem 3.3.9]{Eng}
that $H$ is open in $K$.
However, an open subgyrogroup of a topological gyrogroup is closed by
Proposition \ref{the2.10}. Therefore, $H=K$, that is, $H$ is closed in $G$.
\end{proof}

%%%%%%%%%%%%%%%%%%%%%%%%%%%%%%%%%%%%%%%%%%%%%%%%
%%%%%%%%%%%%%%%%%%%%%%%%%%%%%%%%%%%%%%%%%%%%%%%%
\section{Quotients with respect to strongly $L$-subgyrogroups}
To study the quotients of gyrogroups, Suksumran \cite[Definition 9]{Suk1} defined a normal subgyrogroup in
a similar fashion in groups. Recall that a subgyrogroup $N$ of a gyrogroup $G$ is {\it normal} in $G$, written $N\unlhd G$, if it is the kernel of a gyrogroup homomorphism of $G$. Also, Suksumran obtained the following result in the same paper:
\begin{proposition}\cite[Proposition 38]{Suk1}\label{PP}
Let $G$ be a gyrogroup. If $H$ is a subgyrogroup of $G$ such that
\begin{enumerate}
\item $\text{gyr}[h, a]= \text{id}_{G}$ for all $h\in H, a \in G$;
\item $\text{gyr}[b, a]H\subseteq H$ for all $b, a \in G$;
\item $a\oplus H=H\oplus a$ for all $ a \in G$,
\end{enumerate}
then $H\unlhd G$, that is $H$ is a normal subgyrogroup of $G$.
\end{proposition}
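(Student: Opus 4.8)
The plan is to realize $H$ as the kernel of the canonical projection onto a coset gyrogroup $G/H$. First I would draw out the consequences of hypothesis (1): by the inversive symmetry $\text{gyr}[x,y]=\text{gyr}^{-1}[y,x]$ of Proposition \ref{Pro:gyr}(7), condition (1) also forces $\text{gyr}[a,h]=\text{id}_{G}$ for all $a\in G$ and $h\in H$; in particular $\text{gyr}[a,h](H)=H$, so $H$ is an $L$-subgyrogroup of $G$, and hence its left cosets $\{a\oplus H:a\in G\}$ form a partition of $G$ (a standard property of $L$-subgyrogroups). Put $G/H=\{a\oplus H:a\in G\}$ and let $\pi\colon G\to G/H$ be $\pi(a)=a\oplus H$. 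Since the cosets partition $G$, two of them coincide as soon as they have a common point, and $x\oplus H$ is precisely the coset containing $x$; I will use this repeatedly.

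The heart of the argument is to show that $(a\oplus H)\oplus(b\oplus H):=(a\oplus b)\oplus H$ is a well-defined binary operation on $G/H$, and this is where all three hypotheses enter together. Suppose $a\oplus H=a'\oplus H$ and $b\oplus H=b'\oplus H$, say $a'=a\oplus h_{1}$ and $b'=b\oplus h_{2}$ with $h_{1},h_{2}\in H$. Since $\text{gyr}[a,h_{1}]=\text{id}_{G}$, the left gyroassociative law gives $a'\oplus b=(a\oplus h_{1})\oplus b=a\oplus(h_{1}\oplus b)$; by (3) we have $h_{1}\oplus b\in H\oplus b=b\oplus H$, say $h_{1}\oplus b=b\oplus h_{3}$, so $a'\oplus b=a\oplus(b\oplus h_{3})=(a\oplus b)\oplus\text{gyr}[a,b](h_{3})$ with $\text{gyr}[a,b](h_{3})\in H$ by (2); hence $a'\oplus b\in(a\oplus b)\oplus H$. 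Next, $a'\oplus b'=a'\oplus(b\oplus h_{2})=(a'\oplus b)\oplus\text{gyr}[a',b](h_{2})$ with $\text{gyr}[a',b](h_{2})\in H$ by (2), so $a'\oplus b'$ lies in the coset of $a'\oplus b$, which equals $(a\oplus b)\oplus H$ by the preceding sentence. Therefore $(a'\oplus b')\oplus H=(a\oplus b)\oplus H$, as needed.

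It remains to promote $(G/H,\oplus)$ to a gyrogroup and to identify $\pi$. The identity element is $H=e\oplus H$, the inverse of $a\oplus H$ is $\ominus a\oplus H$, and left cancellation holds in $G/H$, namely $(a\oplus H)\oplus(\ominus(a\oplus H)\oplus(b\oplus H))=b\oplus H$, as one sees at once from Proposition \ref{Pro:gyr}(1),(2) and the definition of $\oplus$ on cosets. Define the gyration on $G/H$ by the gyrator identity, $\text{gyr}[X,Y](Z):=\ominus(X\oplus Y)\oplus(X\oplus(Y\oplus Z))$; this is well defined because $\oplus$ on $G/H$ is, and by the gyrator identity in $G$ (Proposition \ref{Pro:gyr}(3)) one computes, for any representatives $a\in X$, $b\in Y$, $c\in Z$, that $\text{gyr}[X,Y](Z)=\text{gyr}[a,b](c)\oplus H$. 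Using (2) (and that each $\text{gyr}[a,b]$ is a groupoid automorphism of $G$) this map is a groupoid automorphism of $G/H$; the left gyroassociative law $(G3)$ for $G/H$ is then immediate from left cancellation in $G/H$, and $(G4)$ for $G/H$ follows from $(G4)$ in $G$ applied to the representatives $a\oplus b$ and $b$. Finally $\pi(a\oplus b)=(a\oplus b)\oplus H=(a\oplus H)\oplus(b\oplus H)=\pi(a)\oplus\pi(b)$, so $\pi$ is a gyrogroup homomorphism, and $\pi(a)=H\iff a\oplus H=H\iff a\in H$ (since $e\in H$), so $\ker\pi=H$. Thus $H$ is the kernel of a gyrogroup homomorphism, that is, $H\unlhd G$.

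The step I expect to be the main obstacle is the well-definedness of the coset operation in the second paragraph: since $\oplus$ is non-associative, rebracketing $(a\oplus h_{1})\oplus(b\oplus h_{2})$ creates gyrations, and the three hypotheses are exactly what is needed to control them — (1), via inversive symmetry, trivializes $\text{gyr}[a,h]$; (3) converts a left perturbation by $H$ into a right one; and (2) keeps every gyrated copy of an element of $H$ inside $H$. Once the operation is known to be well defined, everything else is a routine transfer of the gyrogroup axioms from $G$ through $\pi$.
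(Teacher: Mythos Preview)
The paper does not supply its own proof of this proposition; it is quoted verbatim from \cite[Proposition 38]{Suk1} and used only as motivation for Definition \ref{def4.1}, so there is nothing in the present paper to compare your argument against line by line.

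That said, your argument is correct and is exactly the natural route (and the one taken in \cite{Suk1}): realize $H$ as the kernel of the canonical projection $\pi\colon G\to G/H$. Your handling of the critical step, well-definedness of the coset operation, is clean: you use (1) together with inversive symmetry to trivialize $\text{gyr}[a,h_1]$ and hence rebracket $(a\oplus h_1)\oplus b$ as $a\oplus(h_1\oplus b)$; then (3) to rewrite $h_1\oplus b=b\oplus h_3$; and (2) to absorb the resulting gyrations $\text{gyr}[a,b](h_3)$ and $\text{gyr}[a',b](h_2)$ back into $H$. The verification that the induced gyration on $G/H$ satisfies $\text{gyr}[a\oplus H,b\oplus H](c\oplus H)=\text{gyr}[a,b](c)\oplus H$ via the gyrator identity, and that $(G3)$, $(G4)$ descend from $G$, is routine as you say. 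One small remark: to see that $\text{gyr}[X,Y]$ is a bijection of $G/H$ you implicitly need condition (2) again (applied to the inverse gyration $\text{gyr}[b,a]$) for injectivity; you have this, since (2) is stated for all $a,b\in G$.
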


Recall that a subgyrogroup $H$ of a gyrogroup $G$ is a {\it $L$-subgyrogroup} if $\text{gyr}[a, h]H=H$ for all $h\in H, a \in G$. According to \cite[Proposition 24]{Suk1} it follows that the condition ``(2) $\text{gyr}[b, a]H\subseteq H$ for all $b, a \in G$" in Proposition \ref{PP} implies the condition``$\text{gyr}[b, a]H= H$ for all $b, a \in G$". This leads us to define strongly $L$-subgyrogroups as follow:
\begin{definition}\label{def4.1}
A subgyrogroup $H$ of a gyrogroup $G$ is said to
be a {\it strongly $L$-subgyrogroup}\footnote{In \cite[Definition 3.9]{BX} it is called a strongly subgyrogroup.}, denoted by $H\leqslant_{SL} G$, if $\text{gyr}[a, b](H)\subseteq H$ for all $a, b\in G$.
\end{definition}

\begin{remark}
It is easy to see that each normal subgyrogroup is a strongly $L$-subgyrogroup and each strongly $L$-subgyrogroup is an $L$-subgyrogroup.
There exist strongly $L$-subgyrogroups which are not normal subgyrogroups.
Since group $G$ can be considered the special gyrogroup where $\text{gyr~}[a, b]=\text{id}$ for all $a, b\in G$.
Each subgroup $H\subseteq G$ satisfies $\text{gyr~}[a, b](H)= H$ for all $a, b\in G$, and obviously every subgroup doesn't have to be a normal subgroup. However, we don't known whether there is an $L$-subgyrogroup which is not a strongly $L$-subgyrogroup.
\end{remark}

\begin{proposition}\label{Pro4.3}
Let $G$ be a gyrogroup. If $H\leqslant_{SL} G$, then
$a\oplus(b\oplus H)=(a\oplus b)\oplus H$ for all $a, b\in G$.
\end{proposition}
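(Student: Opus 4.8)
The plan is to compute $a\oplus(b\oplus H)$ using the gyroassociative law (G3) and then show that the gyration that appears can be absorbed into $H$. Fix $a,b\in G$ and let $h\in H$. By (G3) we have $a\oplus(b\oplus h)=(a\oplus b)\oplus \text{gyr}[a,b](h)$. Since $H\leqslant_{SL}G$, Definition \ref{def4.1} gives $\text{gyr}[a,b](h)\in\text{gyr}[a,b](H)\subseteq H$, so $a\oplus(b\oplus h)\in(a\oplus b)\oplus H$. As $h$ ranges over $H$ this shows $a\oplus(b\oplus H)\subseteq(a\oplus b)\oplus H$.

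For the reverse inclusion I would run the same argument backwards, which is where a little care is needed: given $h\in H$, I want to write $(a\oplus b)\oplus h$ in the form $a\oplus(b\oplus h')$ for some $h'\in H$. Applying (G3) again, $(a\oplus b)\oplus h=(a\oplus b)\oplus\text{gyr}[a,b]\bigl(\text{gyr}^{-1}[a,b](h)\bigr)=a\oplus\bigl(b\oplus\text{gyr}^{-1}[a,b](h)\bigr)$, so it suffices to know that $\text{gyr}^{-1}[a,b](h)\in H$. By Proposition \ref{Pro:gyr}(7), $\text{gyr}^{-1}[a,b]=\text{gyr}[b,a]$, and applying the strongly $L$-subgyrogroup condition again, $\text{gyr}[b,a](h)\in\text{gyr}[b,a](H)\subseteq H$. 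Hence $(a\oplus b)\oplus h\in a\oplus(b\oplus H)$, giving $(a\oplus b)\oplus H\subseteq a\oplus(b\oplus H)$ and therefore the desired equality.

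The only potential obstacle is the reverse inclusion: the gyroassociative law is naturally set up to move parentheses from "$a\oplus(b\oplus z)$" to "$(a\oplus b)\oplus \text{gyr}[a,b](z)$", so to get an element of $a\oplus(b\oplus H)$ starting from $(a\oplus b)\oplus h$ one must invert the gyration and invoke that $\text{gyr}^{-1}[a,b](H)\subseteq H$ as well. This follows painlessly from Proposition \ref{Pro:gyr}(7) together with Definition \ref{def4.1}; in fact, since $\text{gyr}[a,b]$ is a bijection of $G$ (Proposition \ref{Pro2.4}(4)) and $\text{gyr}[a,b](H)\subseteq H$ while $\text{gyr}[b,a](H)\subseteq H$ with $\text{gyr}[b,a]=\text{gyr}^{-1}[a,b]$, one even gets $\text{gyr}[a,b](H)=H$, recovering the remark that a strongly $L$-subgyrogroup is an $L$-subgyrogroup. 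With both inclusions in hand the proposition is complete.
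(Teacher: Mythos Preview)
Your proof is correct and follows the same route as the paper's: apply (G3) to get $a\oplus(b\oplus H)=(a\oplus b)\oplus\text{gyr}[a,b](H)$ and then use the strongly $L$-subgyrogroup condition. The paper simply writes $\text{gyr}[a,b](H)=H$ in one step (implicitly using the inversive symmetry $\text{gyr}^{-1}[a,b]=\text{gyr}[b,a]$ that you spell out), whereas you carefully verify both inclusions; the extra detail you supply is exactly what justifies that compressed line.
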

\begin{proof}
Let $a, b\in G$. By Definition \ref{def4.1}, a direct computation gives
 $a\oplus(b\oplus H)=(a\oplus b)\oplus \text{gyr~}[a, b](H)=(a\oplus b)\oplus H$.
\end{proof}

Let $(G, \tau,\oplus)$ be a topological gyrogroup and $H$ a strongly $L$-subgyrogroup of $G$.
 It follows from \cite[Theorem 20]{Suk3} that $G/H =\{a\oplus H: a\in G\}$ is a partition of $G$. We denote
by $\pi$ the mapping $a\mapsto a\oplus H$ from $G$ onto $G/H$. Clearly, for each $a\in G$, we have
we have $\pi^{-1}(\pi(a))= a\oplus H$,
for each $a \in G$. Denote by $\tau(G)$ the topology of $G$. In the set
$G/H$, we define a family $\tau(G/H)$ of subsets as follows:
$$\tau(G/H) = \{O\subseteq G/H : \pi^{-1}(O)\in\tau(G)\}.$$

The following result improves \cite[Theorem 3.13]{BX}.
\begin{theorem}\label{the3.7}
Let $(G, \tau,\oplus)$ be a topological gyrogroup and $H$ a closed strongly $L$-subgyrogroup of $G$.
Then the natural homomorphism
$\pi$ from a topological gyrogroup $G$ to its quotient topology on $G/H$ is an open and continuous mapping, and $G/H$ is a homogeneous $T_1$-space.
\end{theorem}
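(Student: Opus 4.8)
The plan is to verify the three assertions in sequence, following the classical template for topological groups (as in Arhangel'skii--Tkachenko) and adapting it using the algebraic identities for strongly $L$-subgyrogroups established above, chiefly Proposition~\ref{Pro4.3}.

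First I would show $\pi$ is continuous and open. Continuity is immediate from the definition of $\tau(G/H)$: if $O\in\tau(G/H)$ then $\pi^{-1}(O)\in\tau(G)$ by fiat. For openness, let $U\in\tau(G)$; I must show $\pi(U)\in\tau(G/H)$, i.e. $\pi^{-1}(\pi(U))\in\tau(G)$. The key computation is that $\pi^{-1}(\pi(U))=\bigcup_{a\in U}(a\oplus H)$; indeed $b\in\pi^{-1}(\pi(U))$ iff $b\oplus H=a\oplus H$ for some $a\in U$, and since $G/H$ is a partition (by \cite[Theorem 20]{Suk3}) this is equivalent to $b\in a\oplus H$ for some $a\in U$. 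Now $\bigcup_{a\in U}(a\oplus H)=U\oplus H$, which is open in $G$ by Proposition~\ref{Pro2.4}(5). Hence $\pi$ is open.

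Next, that $G/H$ is homogeneous. For each $a\in G$ I would define a map $\widehat{L}_a\colon G/H\to G/H$ by $\widehat{L}_a(b\oplus H)=(a\oplus b)\oplus H$. This is well defined: if $b\oplus H=c\oplus H$ then $b\oplus H\subseteq c\oplus H$, and applying Proposition~\ref{Pro4.3} twice gives $(a\oplus b)\oplus H=a\oplus(b\oplus H)\subseteq a\oplus(c\oplus H)=(a\oplus c)\oplus H$, and symmetrically the reverse inclusion, so $\widehat{L}_a$ is well defined; it satisfies $\widehat{L}_a\circ\pi=\pi\circ L_a$. Since $\pi$ is a continuous open surjection and $L_a$ is a homeomorphism of $G$ (Proposition~\ref{Pro2.4}(2)), a standard diagram chase shows $\widehat{L}_a$ is continuous (it is continuous because $\pi$ is quotient and $\pi\circ L_a$ is continuous) and open (because $\pi$ is open and $L_a$ is open); its inverse is $\widehat{L}_{\ominus a}$ (using the left cancellation law, Proposition~\ref{Pro:gyr}(2), together with Proposition~\ref{Pro4.3} to see $\widehat{L}_{\ominus a}\circ\widehat{L}_a=\mathrm{id}$ and $\widehat{L}_a\circ\widehat{L}_{\ominus a}=\mathrm{id}$). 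So each $\widehat{L}_a$ is a homeomorphism of $G/H$, and since $\widehat{L}_a(H)=a\oplus H=\widehat{L}_a(e\oplus H)$ carries the point $\pi(e)$ to the arbitrary point $\pi(a)=a\oplus H$, the space $G/H$ is homogeneous.

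Finally, the $T_1$ property. By homogeneity it suffices to show that the single point $\pi(e)=H$ is closed in $G/H$; equivalently, that $\{H\}$ has open complement, i.e. $G/H\setminus\{H\}\in\tau(G/H)$, i.e. $\pi^{-1}(G/H\setminus\{H\})=G\setminus\pi^{-1}(\{H\})=G\setminus H\in\tau(G)$. But this is exactly the hypothesis that $H$ is closed in $G$. Hence $\{H\}$ is closed, every singleton is closed by homogeneity, and $G/H$ is $T_1$.

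The only point requiring care, and the natural place for an obstruction, is the well-definedness and bijectivity of $\widehat{L}_a$: this is where the strongly $L$-subgyrogroup hypothesis is genuinely used (through Proposition~\ref{Pro4.3}, i.e. the ``left-coset associativity'' $a\oplus(b\oplus H)=(a\oplus b)\oplus H$), since for a general subgyrogroup left cosets need not behave well under left gyrotranslation. The topological parts (continuity/openness of $\widehat{L}_a$, passage through the quotient map) are routine once one knows $\pi$ is a continuous open surjection; I would state these via the universal property of the quotient topology rather than computing with open sets directly.
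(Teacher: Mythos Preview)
Your proof is correct and follows essentially the same template as the paper's. The only tactical differences are: (i) the paper cites \cite[Theorem 3.7]{Bao} for the openness/continuity of $\pi$ rather than proving it directly as you do; (ii) to show $\widehat{L}_a$ is a homeomorphism the paper computes with basic neighbourhoods of the form $\pi((x\oplus U)\oplus H)$, whereas you use the cleaner quotient-map factorisation $\widehat{L}_a\circ\pi=\pi\circ L_a$; and (iii) for transitivity the paper produces a single left translation sending $x\oplus H$ to $y\oplus H$ via the coaddition, taking $a=y\boxminus x$, while your argument (moving every point to or from $\pi(e)$) yields transitivity by composing two such maps. None of these differences is substantive.
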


\begin{proof}
Since a strongly $L$-subgyrogroup of a gyrogroup $G$ is  an $L$-subgyrogroup, $\pi:G\rightarrow G/H$ is open and continuous by \cite[Theorem 3.7]{Bao}.

Let us now prove the homogeneity of $G/H$. For any $a\in G$, define a mapping $h_a$ of
$G/H$ to itself by the rule $h_a(x\oplus H)=a\oplus(x\oplus H)$. Since $a\oplus(x\oplus H)=(a\oplus x)\oplus H\in G/H$
by Proposition \ref{Pro4.3}, this definition is correct. Since $G$ is a gyrogroup, the mapping $h_a$ is evidently a bijection of $G/H$ onto $G/H$. In fact, $h_a$
is a homeomorphism. This can be seen from the following argument.

Take any $x\oplus H\in G/H$ and any open neighbourhood $U$ of $0$. Then $\pi((x\oplus U)\oplus H)$ is a basic
neighbourhood of $x\oplus H$ in $G/H$. Similarly, the set $\pi(a\oplus ((x\oplus U)\oplus H))$ is a basic neighbourhood of
$a\oplus (x\oplus H)$ in $G/H$. Since, obviously, $h_a(\pi((x\oplus U)\oplus H))= \pi(a\oplus ((x\oplus U)\oplus H))$, it easily follows that $h_a$ is a
homeomorphism. Now, for any given $x\oplus H$ and $y\oplus H$ in
$G/H$, we can take $a =y\boxminus x$. Then $h_a(x \oplus H)=a\oplus(x \oplus H)=(a\oplus x) \oplus H=((y\boxminus x)\oplus x)\oplus H=y \oplus H$, by Proposition \ref{Pro4.3}. Hence, the quotient space $G/H$ is
homogeneous. It is a $T_1$-space, since all cosets $x\oplus H$ are closed in $G$ and the mapping
$\pi$ is quotient.
\end{proof}

In \cite[Theorem 3]{Atip}, it was proved that every $T_1$ topological gyrogroup is regular. In fact, this result holds for the quotient space $G/H$ of a topological gyrogroup $G$ with respect to a strongly $L$-subgyrogroup $H$ of $G$.

\begin{lemma}\label{lem12}
Suppose that $(G, \tau,\oplus)$ is a topological gyrogroup, $H$ is a closed strongly $L$-subgyrogroup of $G$, it
is the natural quotient mapping of $G$ onto the quotient space $G/H$, and let $U$ and $V$ be open
neighbourhoods of the neutral element 0 in $G$ such that $\ominus V\oplus V\subseteq U$. Then $\overline{\pi(V)}\subseteq \pi(U)$.
\end{lemma}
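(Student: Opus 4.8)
The plan is to mimic the classical topological-group argument for the analogous statement about $\overline{\pi(V)} \subseteq \pi(U)$. First I would observe that, because $\pi$ is continuous and open (Theorem \ref{the3.7}), a point $x \oplus H \in G/H$ lies in the closure $\overline{\pi(V)}$ if and only if every basic neighbourhood $\pi((x \oplus W) \oplus H)$ of $x \oplus H$, for $W$ an open neighbourhood of $0$, meets $\pi(V)$. So fix $x \oplus H \in \overline{\pi(V)}$; I want to show $x \oplus H \in \pi(U)$, i.e.\ that $(x \oplus H) \cap U \neq \emptyset$, or more precisely that $x \oplus H = u \oplus H$ for some $u \in U$.

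Next I would exploit the hypothesis $\ominus V \oplus V \subseteq U$. Since $V$ is an open neighbourhood of $0$, the set $x \oplus V$ is an open neighbourhood of $x$, and hence $\pi((x \oplus V) \oplus H) = \pi(x \oplus V)$ is a (basic) open neighbourhood of $x \oplus H$. As $x \oplus H \in \overline{\pi(V)}$, this neighbourhood meets $\pi(V)$: there exist $v_1 \in V$ and $v_2 \in V$ with $\pi(x \oplus v_1) = \pi(v_2)$, that is, $(x \oplus v_1) \oplus H = v_2 \oplus H$. I then want to solve this for $x \oplus H$ in terms of an element of $\ominus V \oplus V \subseteq U$. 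Using the coset identities available for strongly $L$-subgyrogroups — in particular Proposition \ref{Pro4.3} (which lets cosets absorb left-translations associatively) together with the cancellation laws of Proposition \ref{Pro:gyr} — I would rearrange $(x \oplus v_1) \oplus H = v_2 \oplus H$ to obtain $x \oplus H = (\ominus v_2 \oplus x) \oplus \cdots$; more carefully, from $(x\oplus v_1)\oplus H = v_2\oplus H$ one gets $x \oplus H = x \oplus (v_1 \oplus H)$ (since $v_1 \in H$-translate doesn't quite hold — instead I use $v_1 \oplus H \subseteq V \oplus H$) and, applying left cancellation, $\ominus v_2 \oplus ((x \oplus v_1)\oplus H)$ collapses appropriately. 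The cleanest route: rewrite the equality as $x \oplus H = v_2 \oplus (\ominus' v_1)$-type expression and show the representative can be chosen in $\ominus V \oplus V$. Concretely, since $x\oplus v_1 \in v_2 \oplus H$, there is $h \in H$ with $x \oplus v_1 = v_2 \oplus h$; then $x = (v_2 \oplus h) \oplus (\ominus v_1')$ where $v_1'$ accounts for a gyration, and pushing everything back through $\pi$ and using that $H$ is strongly $L$ (so gyrations preserve $H$ and cosets behave associatively) yields $x \oplus H = (\ominus v_2 \text{-type element}) \oplus H$ with that element in $\ominus V \oplus V \subseteq U$, hence $x \oplus H \in \pi(U)$.

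The main obstacle — and the step I would spend the most care on — is the gyration bookkeeping in the coset manipulation. In a group one simply writes $x = v_2 v_1^{-1} \in V V^{-1}$ (or the symmetric version) with no fuss, but here every reassociation introduces a $\mathrm{gyr}[\cdot,\cdot]$ term, and I must verify that each such gyration either acts trivially on the relevant cosets or can be absorbed because $\mathrm{gyr}[a,b](H) \subseteq H$ for all $a,b$ (Definition \ref{def4.1}). The key facts that make this go through are Proposition \ref{Pro4.3} ($a \oplus (b \oplus H) = (a \oplus b) \oplus H$), the left cancellation law, and the gyrator identity from Proposition \ref{Pro:gyr}(3); I expect that a judicious choice of which element to call $a$ in $h_a$-style reasoning, combined with $a = \ominus v_2 \boxplus$-type coaddition (as used at the end of the proof of Theorem \ref{the3.7}), will let me identify the coset representative explicitly as a member of $\ominus V \oplus V$. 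Once that identification is made, the containment $\ominus V \oplus V \subseteq U$ finishes the argument immediately, so the whole proof reduces to this one algebraic identity together with the continuity/openness of $\pi$ already in hand.
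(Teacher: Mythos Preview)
Your overall strategy is right, but there is a concrete gap at exactly the place you flag as ``the main obstacle'': you chose the wrong translate. You take the neighbourhood $x\oplus V$ of $x$ and arrive at $(x\oplus v_1)\oplus H=v_2\oplus H$ with $v_1,v_2\in V$. From this you must extract $x\oplus H$, and your attempts (``$x\oplus H=(\ominus v_2\oplus x)\oplus\cdots$'', then ``$v_1\oplus H\subseteq V\oplus H$'', then a $\boxminus$-type formula) never land, because solving $x\oplus v_1=v_2\oplus h$ for $x$ requires \emph{right} cancellation, which in a gyrogroup introduces a gyration depending on $x$ itself (e.g.\ $x=(v_2\oplus h)\boxminus v_1$) and does not place $x$ in $(\ominus V\oplus V)\oplus H$. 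Proposition~\ref{Pro4.3} only lets you rewrite $(x\oplus v_1)\oplus H$ as $x\oplus(v_1\oplus H)$, which still has $v_1\oplus H$, not $H$, on the right; there is no way to peel off the $v_1$ without the cooperation, and then the representative you get is not visibly in $\ominus V\oplus V$.

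The fix, and what the paper does, is to use the \emph{right} translate $V\oplus x$ instead. Then $\pi(V\oplus x)\cap\pi(V)\neq\emptyset$ gives $a\oplus x=b\oplus h$ with $a,b\in V$, $h\in H$, and now ordinary \emph{left} cancellation (Proposition~\ref{Pro:gyr}(2)) applies directly:
\[
x=\ominus a\oplus(b\oplus h)=(\ominus a\oplus b)\oplus\mathrm{gyr}[\ominus a,b](h)\in(\ominus V\oplus V)\oplus H\subseteq U\oplus H,
\]
using only $(G3)$ and the strong $L$-hypothesis $\mathrm{gyr}[\ominus a,b](H)\subseteq H$. No cooperation, no Proposition~\ref{Pro4.3}, no $x$-dependent gyrations. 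The entire ``gyration bookkeeping'' you worried about collapses to that single line once you put $V$ on the left of $x$ rather than on the right.
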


\begin{proof}
Take any $x\in G$ such that $\pi(x)\in\overline{\pi(V)}$.
Since $V\oplus x$ is an open neighbourhood
of $x$ and the mapping $\pi$ is open, $\pi(V\oplus x)$ is an open neighbourhood of $\pi(x)$.
Therefore, $\pi(V\oplus x)\cap\pi(V)\neq\emptyset$. It follows that, for some $a\in V$ and $b\in V$, we have $\pi(a\oplus x) = \pi(b)$,
that is, $a\oplus x=b\oplus h$, for some $h\in H$. Hence, $x=\ominus a\oplus(b\oplus h)=(\ominus a\oplus b)\oplus\text{gyr}[\ominus a,b]h=(\ominus a\oplus b)\oplus h_1$,
for some $h_1\in H$. Since $\ominus a\oplus b\in \ominus V\oplus V\subseteq U$, we get $x\in U\oplus H$.
Therefore, $\pi(x)\in \pi(U\oplus H)=\pi(U).$
\end{proof}

\begin{theorem}\label{th4.8}
For any topological gyrogroup $(G, \tau,\oplus)$ and any closed strongly $L$-subgyrogroup $H$ of $G$, the
quotient space $G/ H$ is regular.
\end{theorem}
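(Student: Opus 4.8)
The plan is to show that $G/H$ satisfies the standard "$T_3$ via small symmetric neighbourhoods" criterion, using the homogeneity from Theorem \ref{the3.7} together with Lemma \ref{lem12}. By homogeneity it suffices to separate an arbitrary closed set from the single point $\pi(0)$, or, equivalently, to show that every open neighbourhood of $\pi(0)$ in $G/H$ contains the closure of a smaller open neighbourhood of $\pi(0)$. So the first step is: given an open set $O\subseteq G/H$ with $\pi(0)\in O$, pull it back to the open set $\pi^{-1}(O)$ in $G$, which is an open neighbourhood of $0$ since $0\in H=\pi^{-1}(\pi(0))\subseteq\pi^{-1}(O)$.

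Next I would invoke continuity of the operation $\ominus x\oplus y$ at $(0,0)$ (equivalently, use that $G$ is a topological gyrogroup) to choose an open neighbourhood $V$ of $0$ with $\ominus V\oplus V\subseteq\pi^{-1}(O)=:U$. Then Lemma \ref{lem12} applies verbatim and yields $\overline{\pi(V)}\subseteq\pi(U)=\pi(\pi^{-1}(O))=O$, where the last equality holds because $\pi$ is surjective (indeed $\pi(\pi^{-1}(O))=O$ for any surjection). Since $\pi$ is open (Theorem \ref{the3.7}), $\pi(V)$ is an open neighbourhood of $\pi(0)$ contained in $O$ whose closure is still contained in $O$. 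This shows that $\pi(0)$ has a neighbourhood base consisting of sets whose closures are small, i.e. $G/H$ is regular at the point $\pi(0)$.

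Finally I would transfer this to an arbitrary point $\pi(a)$ using the homeomorphism $h_a\colon G/H\to G/H$ constructed in the proof of Theorem \ref{the3.7} (which sends $\pi(0)$ to $\pi(a)$): regularity at one point of a homogeneous space gives regularity at every point, hence regularity of $G/H$. Combined with the $T_1$ property already established in Theorem \ref{the3.7}, we conclude $G/H$ is $T_3$, hence regular in the sense used here.

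I do not expect a serious obstacle: Lemma \ref{lem12} is precisely the technical heart and it is already proved, so the argument is essentially an assembly of homogeneity plus that lemma plus openness of $\pi$. The only point requiring a little care is making sure the neighbourhood $V$ with $\ominus V\oplus V\subseteq U$ exists — this is immediate from joint continuity of $\oplus$ and continuity of $\ominus$ at $0$ — and then noting $\pi(\pi^{-1}(O))=O$ by surjectivity of $\pi$; both are routine. If one prefers to avoid invoking homogeneity a second time, one can instead run the same $U$–$V$ argument directly at a general point $\pi(x)$ by translating $U$ and $V$ on the left by $x$, but using $h_a$ is cleaner.
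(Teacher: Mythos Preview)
Your proposal is correct and follows essentially the same route as the paper's proof: both reduce to regularity at $\pi(0)$ via homogeneity, apply Lemma \ref{lem12} to a symmetric $V$ with $\ominus V\oplus V\subseteq U$, and use openness of $\pi$ to conclude. The only cosmetic difference is that you set $U=\pi^{-1}(O)$ directly and use surjectivity to get $\pi(U)=O$, whereas the paper chooses $U$ with $\pi(U)\subseteq W$ via continuity of $\pi$; these are interchangeable.
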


\begin{proof}
Let $\pi$ be the natural quotient mapping of $G$ onto the quotient space $G/H$, and
let $W$ be an arbitrary open neighbourhood of $\pi(0)$ in $G/H$, where $0$ is the neutral element
of $G.$ By the continuity of $\pi$, we can find an open neighbourhood $U$ of 0 in
$G$ such that $\pi(U)\subseteq W$.
 Since $G$ is a topological gyrogroup, we can choose an open neighbourhood $V$ of 0
such that $\ominus V\oplus V\subseteq U$. Then, by Lemma \ref{lem12}, $\overline{\pi(V)} \subseteq\pi(U)\subseteq W$. Since $\pi(V)$ is an open
neighbourhood of $\pi(0)$, the regularity of $G/H$ at the point $\pi(0)$ is verified. Now it follows
from the homogeneity of $G/H$ that the space $G/H$ is regular.
\end{proof}

Here is an obvious statement which is sometimes quite useful.
\begin{proposition}\label{pro3.8}
Suppose that $(G, \tau,\oplus)$ is a topological gyrogroup, $H$ is a closed strongly $L$-subgyrogroup of
$G$, $\pi$ is the natural quotient mapping of $G$ onto the quotient space $G/H$, $a\in G$, $\lambda_a$
is the left translation of $G$ by $a$ (that is, $\lambda_a(x)=a\oplus x$, for each $x\in G$), and $h_a$ is the left
translation of $G/H$ by $a$ (that is, $h_a(x\oplus H) =a\oplus(x\oplus H)$, for each $x\oplus H\in G/H$). Then $h_a$
is homeomorphisms of $G/H$, and $\pi\circ \lambda_a=h_a\circ\pi$.
\end{proposition}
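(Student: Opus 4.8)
The plan is to verify the two assertions in Proposition \ref{pro3.8} directly, reusing the homeomorphism argument already carried out for $h_a$ inside the proof of Theorem \ref{the3.7}. First I would recall that $h_a$ is well defined as a map $G/H\to G/H$: by Proposition \ref{Pro4.3} we have $a\oplus(x\oplus H)=(a\oplus x)\oplus H\in G/H$, so the value $h_a(x\oplus H)$ does not depend on the chosen representative $x$ (if $x\oplus H=y\oplus H$ then $a\oplus x$ and $a\oplus y$ lie in the same coset, again by Proposition \ref{Pro4.3}). Its inverse is $h_{\ominus a}$ composed appropriately; more precisely, using the left cancellation law (Proposition \ref{Pro:gyr}(2)) together with Proposition \ref{Pro4.3}, one checks $h_{\ominus a}\circ h_a=h_a\circ h_{\ominus a}=\mathrm{id}_{G/H}$, so $h_a$ is a bijection.

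Next I would establish the intertwining identity $\pi\circ\lambda_a=h_a\circ\pi$, which is essentially a restatement of Proposition \ref{Pro4.3}: for any $x\in G$,
\[
(\pi\circ\lambda_a)(x)=\pi(a\oplus x)=(a\oplus x)\oplus H=a\oplus(x\oplus H)=h_a(x\oplus H)=(h_a\circ\pi)(x).
\]
With this identity in hand, continuity and openness of $h_a$ follow formally. Indeed, $\lambda_a$ is a homeomorphism of $G$ by Proposition \ref{Pro2.4}(2), and $\pi$ is an open continuous surjection by Theorem \ref{the3.7}. For any open $O\subseteq G/H$, since $\pi$ is continuous $\pi^{-1}(O)$ is open in $G$, hence $\lambda_a(\pi^{-1}(O))$ is open, and then $h_a^{-1}(O)=\pi(\lambda_a^{-1}(\pi^{-1}(O)))$—or one argues via surjectivity of $\pi$ and the identity $\pi\circ\lambda_a=h_a\circ\pi$ that $\pi^{-1}(h_a^{-1}(O))=\lambda_a^{-1}(\pi^{-1}(O))$ is open, so $h_a^{-1}(O)\in\tau(G/H)$ by definition of the quotient topology; thus $h_a$ is continuous. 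Symmetrically $h_a$ is open: for open $O\subseteq G/H$, $\pi^{-1}(h_a(O))=\lambda_a(\pi^{-1}(O))$ is open in $G$, so $h_a(O)$ is open in $G/H$. Combining this with the already-established bijectivity gives that $h_a$ is a homeomorphism.

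I do not expect any serious obstacle here; the statement is flagged in the paper as "an obvious statement," and every ingredient—well-definedness via Proposition \ref{Pro4.3}, openness and continuity of $\pi$ via Theorem \ref{the3.7}, and the homeomorphism property of $\lambda_a$ via Proposition \ref{Pro2.4}(2)—is already available. The only point requiring a modicum of care is the bookkeeping for the inverse map $h_a^{-1}$: one must confirm that $h_a^{-1}=h_{\ominus a}$ genuinely holds, i.e. that $\ominus a\oplus(a\oplus(x\oplus H))=x\oplus H$, which reduces by Proposition \ref{Pro4.3} to $\ominus a\oplus(a\oplus x)$ and $x$ lying in the same coset; this is immediate from the left cancellation law $\ominus a\oplus(a\oplus x)=x$. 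Once that is noted, the proof is a one-line computation for the intertwining identity plus the standard quotient-topology argument for the topological statements.
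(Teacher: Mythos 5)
Your proposal is correct and complete. Note that the paper itself offers no proof of Proposition \ref{pro3.8} at all (it is introduced as ``an obvious statement''); the only place where the relevant facts are actually argued is inside the proof of Theorem \ref{the3.7}, where $h_a$ is shown to be a homeomorphism by checking that it carries basic neighbourhoods $\pi((x\oplus U)\oplus H)$ to basic neighbourhoods $\pi(a\oplus((x\oplus U)\oplus H))$. You instead prove the intertwining identity $\pi\circ\lambda_a=h_a\circ\pi$ first and then deduce continuity and openness of $h_a$ formally from the definition of the quotient topology, using $\pi^{-1}(h_a^{-1}(O))=\lambda_a^{-1}(\pi^{-1}(O))$ together with the fact that $\lambda_a$ is a homeomorphism (Proposition \ref{Pro2.4}(2)) and $\pi$ is open and continuous. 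This is the cleaner and more standard route, and it has the advantage of making the identity $\pi\circ\lambda_a=h_a\circ\pi$ (which is the actual content of the proposition, and which the paper never verifies explicitly) do the work; the paper's basic-neighbourhood argument obtains the same conclusion without isolating that identity. Your verification of $h_a^{-1}=h_{\ominus a}$ via Proposition \ref{Pro4.3} and the left cancellation law is exactly the right bookkeeping, and well-definedness of $h_a$ is indeed covered by Proposition \ref{Pro4.3} together with the fact that the cosets partition $G$.
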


\begin{theorem}
Suppose that $(G, \tau,\oplus)$ is a topological gyrogroup, $H$ is a closed strongly $L$-subgyrogroup of
$G$, $X$ is a subgyrospace of $G$, $\pi$ is the natural homomorphism of $G$ onto the quotient space
$G/H$, and $Y= \pi(X)$. Suppose also that the space $H$ and the subspace $Y$ of $G/H$ are
first-countable. Then $X$ is also first-countable.
\end{theorem}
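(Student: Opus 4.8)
The plan is to adapt the classical topological-group argument (cf.\ \cite{Arha}) to the present setting, using only that $\pi$ is open (Theorem \ref{the3.7}) and that $\pi^{-1}(\pi(a))=a\oplus H$ for every $a\in G$; in fact $X$ will be used only as a topological subspace of $G$. First I would reduce to producing a countable neighbourhood base at a single point. Fix $x\in X$. The left translation $\lambda_{\ominus x}$ is a homeomorphism of $G$ (Proposition \ref{Pro2.4}(2)) carrying $X$ onto $X':=\ominus x\oplus X$, which contains $0$; by Proposition \ref{pro3.8}, $\pi(X')=h_{\ominus x}(Y)$ is homeomorphic to $Y$ and hence first-countable, while $H$ is unchanged. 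Since $\lambda_{\ominus x}$ restricts to a homeomorphism of $X$ onto $X'$, it suffices to treat the case $0\in X$ and to build a countable base at $0$ in $X$.

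Next I would assemble two countable families of open sets in $G$. From the first-countability of $H$, with a base $\{B_n\}_{n\in\omega}$ at $0$, I would inductively construct open neighbourhoods $W_n$ of $0$ in $G$ with $W_n\cap H\subseteq B_n$ for all $n$ (so that $\{W_n\cap H\}$ is still a base at $0$ in $H$) and with $(\ominus W_{n+1})\oplus W_{n+1}\subseteq W_n$ for all $n$; this is routine, using continuity of $(a,b)\mapsto\ominus a\oplus b$ at $(0,0)$ and writing each $B_n$ as $W_n'\cap H$ with $W_n'$ open in $G$. From the first-countability of $Y=\pi(X)$, with a base $\{O_m\}_{m\in\omega}$ at $\pi(0)$ in $Y$, write $O_m=Y\cap P_m$ with $P_m$ open in $G/H$ and set $D_m:=\pi^{-1}(P_m)$; each $D_m$ is open in $G$ and contains $H=\pi^{-1}(\pi(0))$, so $0\in D_m$. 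The candidate base is the countable family $\mathcal{B}:=\{\,W_n\cap D_m\cap X:\ n,m\in\omega\,\}$ of open neighbourhoods of $0$ in $X$.

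The heart of the proof is checking that $\mathcal{B}$ is a base. Since the sets $O\cap X$ with $O$ open in $G$ and $0\in O$ form a neighbourhood base at $0$ in $X$, it is enough to find, for each such $O$, indices $n,m$ with $W_n\cap D_m\cap X\subseteq O$. Here I would pick an open neighbourhood $V$ of $0$ with $V\oplus V\subseteq O$, then pick $n\ge 1$ with $W_{n-1}\cap H\subseteq V$, and --- since $\pi$ is open, so that $\pi(W_n\cap V)$ is a neighbourhood of $\pi(0)$ in $G/H$ --- pick $m$ with $O_m\subseteq\pi(W_n\cap V)$. Then for $y\in W_n\cap D_m\cap X$ one has $\pi(y)\in P_m\cap\pi(X)=O_m\subseteq\pi(W_n\cap V)$, so $\pi(y)=\pi(v)$ for some $v\in W_n\cap V$; as $\pi^{-1}(\pi(v))=v\oplus H$, we get $y=v\oplus h$ with $h\in H$, hence $h=\ominus v\oplus y$ by left cancellation (Proposition \ref{Pro:gyr}(2)), so $h\in(\ominus W_n)\oplus W_n\subseteq W_{n-1}$ and thus $h\in W_{n-1}\cap H\subseteq V$. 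Therefore $y=v\oplus h\in V\oplus V\subseteq O$, as desired.

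The delicate step is the one just outlined: a priori $W_n$ controls only the ``$H$-direction'' and $D_m\cap X$ only the ``$Y$-direction'', so it is not obvious that their intersection is small in $G$. The point is to arrange $O_m\subseteq\pi(W_n\cap V)$ --- the image of an already small set --- rather than merely $O_m\subseteq\pi(V)$: this forces the lift $v$ of $\pi(y)$ to lie in $W_n$, which is exactly what makes the cube condition $(\ominus W_n)\oplus W_n\subseteq W_{n-1}$ applicable to $\ominus v\oplus y$. The remaining ingredient, constructing the $W_n$ so that the cube conditions hold while $\{W_n\cap H\}$ stays a base at $0$ in $H$, is routine; note that no triple products and no gyrator identities beyond left cancellation are needed.
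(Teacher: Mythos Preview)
Your proof is correct and follows essentially the same route as the paper's: reduce to the case $0\in X$ via a left translation, build a decreasing sequence $\{W_n\}$ of neighbourhoods of $0$ whose traces on $H$ form a base and which satisfy a cube-type condition, take a countable family coming from a base of $Y$ at $\pi(0)$, and verify that the intersections $W_n\cap D_m\cap X$ form a base at $0$ by lifting $\pi(y)$ back into $W_n\cap V$ and using left cancellation to force the $H$-component into $W_{n-1}\cap H\subseteq V$. The only cosmetic differences from the paper are that it takes the $W_n$ symmetric with $W_{n+1}\oplus W_{n+1}\subseteq W_n$ (in place of your $(\ominus W_{n+1})\oplus W_{n+1}\subseteq W_n$) and phrases the key inclusion as ``$z\notin W_{m+1}\oplus(G\setminus W_m)$'' rather than computing $h=\ominus v\oplus y$ directly; the underlying argument is identical.
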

\begin{proof}
By Proposition \ref{pro3.8}, we can assume that the neutral element 0 of $G$ is in $X$
and, for the same reason, it suffices to verify that $X$ is first-countable at 0. Let us fix a
sequence of symmetric open neighbourhoods $W_n$ of 0 in $G$ such that $W_{n+1}\oplus W_{n+1}\subseteq W_n$, for each
$n\in\omega$, and $\{W_n \cap H: n\in\omega\}$ is a base for the space $H$ at 0. We also fix a sequence of
open neighbourhoods $U_n$ of 0 in $G$ such that $\{\pi(U_n)\cap Y: n\in\omega\}$ is a base for $Y$ at $\pi(0)$.
Now put $B_{i,j} = W_i\cap U_j \cap X$, for $i, j \in\omega$. To finish the proof, it suffices to establish the
following:

Claim. The family $\eta= \{B_{i,j}: i, j \in\omega\}$ is a base for $X$ at 0.

Clearly, each $B_{i,j}$ is open in $X$ and contains 0. Now take any open neighbourhood $O$
of 0 in $G$. Let us show that some element of $\eta$ is contained in $O$. There exists an open
neighbourhood $V$ of 0 in $G$ such that $V\oplus V\subseteq O$. Choose $m\in \omega$ such that $W_m\cap H\subseteq V$.
Further, there exists $k\in\omega$ to such that
$$\pi(U_k)\cap Y\subseteq\pi(V\cap W_{m+1}).$$
Let us verify that
$B_{m+1,k}\subseteq O.$
Take any $z \in B_{m+1,k} = W_{m+1}\cap U_k \cap X$. Then $z\in U_k \cap X \subseteq (V\cap W_{m+1})\oplus H$, since
$\pi(z)\in \pi(U_k)\cap Y\subseteq \pi(V\cap W_{m+1})$. However, $z$ does not belong to $W_{m+1}\oplus(G\setminus W_m)$,
since $W_{m+1}\oplus W_{m+1}\subseteq W_m$ and $z\in W_{m+1}= \ominus W_{m+1}$. Hence, $z\in(V\cap W_{m+1})\oplus(H\cap W_m)$. Since
$W_m \cap H\subseteq V,$ we conclude that $z \in V\oplus V\subseteq O$. Thus, $B_{m+1,k}\subseteq O$, and $\eta$ is a base for $X$ at
0. Since $\eta$ is countable, it follows that $X$ is first-countable at 0.
\end{proof}
\begin{corollary}\label{cor24}
Suppose that $(G, \tau,\oplus)$ is a topological gyrogroup and $H$
is a closed strongly $L$-subgyrogroup of $G$. If the spaces $H$ and $G/H$ are first-countable, then the space $G$
is also first-countable.
\end{corollary}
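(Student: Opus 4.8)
The plan is to obtain Corollary~\ref{cor24} as the special case $X=G$ of the theorem immediately preceding it. First I would note that $G$ is trivially a subgyrospace of itself, so the theorem applies with this choice of $X$. Since $\pi$ is the natural mapping of $G$ \emph{onto} $G/H$, we have $Y=\pi(X)=\pi(G)=G/H$. Under these identifications the hypothesis ``the subspace $Y$ of $G/H$ is first-countable'' in the theorem becomes precisely ``$G/H$ is first-countable'', which is assumed in the corollary, and the hypothesis ``$H$ is first-countable'' is common to both statements. The conclusion ``$X$ is first-countable'' then reads ``$G$ is first-countable'', which is exactly what we want.

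There is essentially no obstacle here, since all the real work has already been done: the construction of the countable family $\eta=\{B_{i,j}:i,j\in\omega\}$ with $B_{i,j}=W_i\cap U_j\cap X$, the choice of the neighbourhoods $W_n$ (with $W_{n+1}\oplus W_{n+1}\subseteq W_n$ and $\{W_n\cap H\}$ a base for $H$ at $0$) and $U_n$ (with $\{\pi(U_n)\cap Y\}$ a base for $Y$ at $\pi(0)$), and the verification via the gyrogroup cancellation laws that $B_{m+1,k}\subseteq O$ for a suitable choice of $m,k$, are all carried out in the proof of the preceding theorem. Thus the only thing to check in the corollary is that, after putting $X=G$, the hypotheses line up and $\pi(G)=G/H$ by surjectivity of $\pi$; both are immediate, and no separate computation is needed.
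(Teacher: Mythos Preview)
Your proposal is correct and matches the paper's approach: the corollary is stated immediately after the theorem with no separate proof, so it is understood to follow by taking $X=G$ (and hence $Y=\pi(G)=G/H$) in that theorem, exactly as you describe.
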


\section{Quotients with respect to locally compact subgyrogroups}
In this section, we will establish that for a locally compact strongly $L$-subgyrogroup $H$ in topological gyrogroup $G$,
the natural quotient map $\pi$ of $G$ onto the quotient space $G/H$ has some good properties locally.
This will lead us to some interesting results about how the properties of $G$ depend on the properties of $G/H$ when $H$ is locally compact.

The following result improves \cite[Theorem 3.3]{Bao1}.
\begin{theorem}\label{the4.1}
Suppose that $(G, \tau, \oplus)$ is a topological gyrogroup, $H$ is a closed
strongly $L$-subgyrogroup of $G$, $P$ is a closed symmetric subset of $G$ such that $P$ contains an open
neighbourhood of the neutral element $0$ in $G$, and that $\overline{P\oplus(P\oplus P)}\cap H$ is compact. Let $\pi:G\rightarrow G/H$
be the natural quotient mapping of $G$ onto the quotient space $G/H$. Then the restriction $f$
of $\pi$ to $P$ is a perfect mapping of $P$ onto the subspace $\pi(P)$ of $G/H$.
\end{theorem}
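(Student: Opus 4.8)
The plan is to verify, straight from the definition, that $f:=\pi|_{P}\colon P\to\pi(P)$ is a perfect mapping, that is, a continuous closed surjection all of whose fibres are compact. Continuity is immediate (restriction of the continuous $\pi$) and surjectivity holds by the choice of range, so the two real tasks are: (i) every fibre of $f$ is compact, and (ii) $f$ is closed. Throughout I will use that $\pi$ is open and continuous (Theorem~\ref{the3.7}), that $0\in P$ and that some open neighbourhood $V_{0}$ of $0$ satisfies $V_{0}\subseteq P$, that $\ominus P=P$, that left and right translations are homeomorphisms (Proposition~\ref{Pro2.4}), and I set $K:=\overline{P\oplus(P\oplus P)}\cap H$, compact by hypothesis. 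Since $0\in P$ we have $P=0\oplus P\subseteq P\oplus P$, hence $P\oplus P\subseteq P\oplus(P\oplus P)\subseteq\overline{P\oplus(P\oplus P)}$, so in particular $(P\oplus P)\cap H\subseteq K$.

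\emph{Step 1: the fibres are compact.} Fix $b\in P$; then $f^{-1}(\pi(b))=P\cap\pi^{-1}(\pi(b))=P\cap(b\oplus H)$. If $p=b\oplus h$ with $p\in P$, $h\in H$, then by the left cancellation law (Proposition~\ref{Pro:gyr}) $h=\ominus b\oplus(b\oplus h)=\ominus b\oplus p\in(\ominus P)\oplus P=P\oplus P$, so $h\in(P\oplus P)\cap H\subseteq K$ and $p\in b\oplus K=L_{b}(K)$. Therefore $P\cap(b\oplus H)\subseteq L_{b}(K)$, a compact set; being also closed in $G$ (since $P$ is closed and $b\oplus H=L_{b}(H)$ is closed, $H$ being closed), $P\cap(b\oplus H)$ is compact.

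\emph{Step 2: $f$ is closed.} Since $P$ is closed in $G$, a subset of $P$ closed in $P$ is closed in $G$; so take $F\subseteq P$ closed in $G$, and take $b\in P$ such that $\pi(b)$ lies in the closure of $\pi(F)$ in $G/H$. It is enough to show $b\in F\oplus H=\pi^{-1}(\pi(F))$: then $\pi(b)\in\pi(F)$, and as $b$ ranges over all such points this yields $\overline{\pi(F)}\cap\pi(P)\subseteq\pi(F)$, i.e.\ $\pi(F)$ is closed in the subspace $\pi(P)$. Let $\mathcal{V}$ be the family of symmetric open neighbourhoods $V$ of $0$ with $V\subseteq V_{0}\cap(\ominus V_{0})$, directed by reverse inclusion (so each $V\subseteq P$ and $\ominus V=V$). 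For $V\in\mathcal{V}$, the set $V\oplus b=R_{b}(V)$ is an open neighbourhood of $b$, hence $\pi(V\oplus b)$ is an open neighbourhood of $\pi(b)$ and so meets $\pi(F)$; thus there are $v_{V}\in V$, $f_{V}\in F$, $h_{V}\in H$ with $v_{V}\oplus b=f_{V}\oplus h_{V}$. By left cancellation $h_{V}=\ominus f_{V}\oplus(v_{V}\oplus b)\in(\ominus F)\oplus(V\oplus P)\subseteq P\oplus(P\oplus P)$, so $h_{V}\in K$. As $K$ is compact, pass to a subnet along which $h_{V}\to h_{*}\in K\subseteq H$; along it $v_{V}\to 0$, whence $v_{V}\oplus b\to b$.

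The step I expect to be the main obstacle is to turn this convergence into an actual decomposition $b=f_{*}\oplus h_{*}$ with $f_{*}\in F$. In a topological group one would simply write $f_{V}=(v_{V}\oplus b)\,h_{V}^{-1}\to b\,h_{*}^{-1}\in F$; the gyrogroup substitute is a right cancellation law expressed through the coaddition. From $(G3)$ and $(G4)$ one computes
$(a\oplus c)\boxminus c=(a\oplus c)\oplus\text{gyr}[a\oplus c,c](\ominus c)=(a\oplus c)\oplus\text{gyr}[a,c](\ominus c)=a\oplus\bigl(c\oplus(\ominus c)\bigr)=a$;
since $R_{c}$ is a bijection this shows $R_{c}^{-1}$ is the map $x\mapsto x\boxminus c$, so also $(x\boxminus c)\oplus c=x$. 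Hence $f_{V}=(v_{V}\oplus b)\boxminus h_{V}$, and the map $(a,c)\mapsto a\boxminus c=a\oplus\text{gyr}[a,c](\ominus c)$ is continuous because $\oplus$ and $\ominus$ are continuous and $(x,y,z)\mapsto\text{gyr}[x,y](z)=\ominus(x\oplus y)\oplus\bigl(x\oplus(y\oplus z)\bigr)$ is continuous by the gyrator identity (Proposition~\ref{Pro:gyr}). Consequently $f_{V}\to b\boxminus h_{*}=:f_{*}$ along the subnet, and $f_{*}\in F$ since $F$ is closed. Finally $f_{*}\oplus h_{*}=(b\boxminus h_{*})\oplus h_{*}=b$, so $b\in F\oplus H$, which completes Step 2. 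Together with Step 1, this shows $f=\pi|_{P}$ is a perfect mapping of $P$ onto $\pi(P)$.
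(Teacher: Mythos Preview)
Your proof is correct, and it takes a genuinely different route from the paper's argument for the closedness of $f$.

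The paper argues by separation: given a closed $M\subseteq P$ and $a\in P$ with $f(a)\notin f(M)$, it observes that $(a\oplus H)\cap\overline{P\oplus P}$ is compact (being homeomorphic to a closed subset of $\overline{P\oplus(P\oplus P)}\cap H$) and disjoint from $M$, so one can find an open $W\ni 0$ with $W\oplus\bigl((a\oplus H)\cap\overline{P\oplus P}\bigr)$ disjoint from $M$. It then shows $\pi(W\oplus a)\cap f(M)=\emptyset$, and the key computation uses the strongly $L$-subgyrogroup identity $y\oplus(a\oplus H)=(y\oplus a)\oplus H$ from Proposition~\ref{Pro4.3} to pass from coset equalities back to membership in $(a\oplus H)\cap\overline{P\oplus P}$.

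You instead run a net argument: from $\pi(b)\in\overline{\pi(F)}$ you produce $v_{V}\oplus b=f_{V}\oplus h_{V}$ with $h_{V}$ trapped in the compact set $K$, extract a convergent subnet $h_{V}\to h_{*}$, and then recover $f_{V}=(v_{V}\oplus b)\boxminus h_{V}\to b\boxminus h_{*}\in F$ using the right-cancellation identity $(a\oplus c)\boxminus c=a$ and the joint continuity of $\boxminus$. This is a clean substitute for the group-theoretic step $f_V=(v_Vb)h_V^{-1}$. What your approach buys is that the strongly $L$-subgyrogroup hypothesis is never invoked beyond what is already needed for $G/H$ to be a well-defined quotient with $\pi$ open (namely, that $H$ be an $L$-subgyrogroup); the coset identity of Proposition~\ref{Pro4.3} is bypassed entirely. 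The paper's argument, by contrast, is net-free and perhaps more elementary topologically, but leans on that extra algebraic identity at the crucial point.
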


\begin{proof}
Clearly, $f$ is continuous. First, we claim that $f^{-1}(f(a))$ is compact for each $a\in P$. Indeed,
from the definition of $f$, we have $f^{-1}(f(a))=(a\oplus H)\cap P$. Since
the subspace $(a\oplus H) \cap P$ and $H\cap (\ominus a) \oplus P)$
 are homeomorphic, thus both of them are closed in $G$, since $H$ is closed.
 Since $\ominus a\in\ominus P =P$, we have $H\cap ((\ominus a)\oplus P)\subseteq H \cap(P\oplus P)\subseteq \overline{P\oplus(P\oplus P)}\cap H$.
Hence, $H\cap ((\ominus a) \oplus P)$ is compact and so is the set $f^{-1}(f(a))$.

Next we prove that the mapping $f$ is closed. Let us fix any closed subset $M$ of $P$,
and let $a$ be a point of $P$ such that $f(a)\notin f(M)$. Then $f^{-1}(f(a))\cap M=\emptyset$, i.e., $((a\oplus H)\cap P)\cap M=\emptyset$. In particular,
$(a\oplus H)\cap \overline{P\oplus P}\cap M=\emptyset $, since $M\subseteq P$ and $P$ contains $0$. Clearly, the subspace $(a\oplus H)\cap \overline{P\oplus P}$ and
$H\cap \overline{\ominus a\oplus(P\oplus P)}\subseteq H\cap\overline{P\oplus(P\oplus P)}$ are homeomorphic. Thus from the compactness of $H\cap\overline{P\oplus(P\oplus P)}$ it follows that $(a\oplus H)\cap \overline{P\oplus P}$ is compact.
Since $(a\oplus H)\cap \overline{P\oplus P}\cap M=\emptyset$ and $M$ is closed, there exists
an open neighbourhood $W$ of $0$ in $G$ such that $W\oplus ((a\oplus H)\cap \overline{P\oplus P})\cap M=\emptyset $.

Note that the set $\pi(W \oplus a)$ is an open
neighborhood of $f(a)$ in $G/H$, since the quotient mapping $\pi$ is open and $W \oplus a$ is an open neighbourhood of $a$. According to assumption we can take $W\subseteq P$ such that $\ominus W=W$. We claim that $\pi(W \oplus a)\cap f(M)=\emptyset$,
which implies that $f(a)\notin \overline{f(M)}$. That means that $f(M)$ is closed in $f(P)$.

If $\pi(W \oplus a)\cap f(M)\neq\emptyset$, then we have that $\pi^{-1}(\pi(W \oplus a))\cap \pi^{-1}(f(M))\neq\emptyset$, i.e., $(W \oplus a)\oplus H\cap M\oplus H\neq\emptyset$.
Thus there are $y\in W$ and $m\in M$ such that $(y\oplus a)\oplus H=m\oplus H$. Since $H$ is a strongly $L$-subgyrogroup, we have that $y\oplus (a\oplus H)=m\oplus H$ by Proposition \ref{Pro4.3}. Then $a\oplus h=\ominus y\oplus m\in\overline{P\oplus P}$, for some $h\in H$,
since $\ominus y\in \ominus W = W \subseteq P$ and $m\in M\subseteq P$.
Besides, $a\oplus h\in a\oplus H$. Hence,  $a\oplus h\in (a\oplus H)\cap \overline{P\oplus P}$ and
$m=y\oplus(a\oplus h)\in W\oplus ((a \oplus H)\cap \overline{P\oplus P})$. Thus, $m\in M \cap W\oplus ((a \oplus H)\cap \overline{P\oplus P})$,
which contradicts $W\oplus ((a\oplus H)\cap \overline{P\oplus P})\cap M=\emptyset $.
\end{proof}

The following result improves \cite[Theorem 3.4]{Bao1}.

\begin{theorem}\label{the4.2}
Suppose that $(G, \tau, \oplus)$ is a topological gyrogroup, $H$ is a locally compact strongly $L$-subgyrogroup of $G$,
and $\pi: G \rightarrow G/H$ is the natural quotient mapping of $G$
onto the quotient space $G/H$.  Then there
exists an open neighborhood $U$ of the identity element $0$ such that $\pi(\overline{U})$ is closed in $G/H$ and the restriction
of $\pi$ to $\overline{U}$ is a perfect mapping from $\overline{U}$ onto the subspace $\pi(\overline{U})$.
\end{theorem}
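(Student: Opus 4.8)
The plan is to reduce everything to Theorem~\ref{the4.1}: find an open symmetric neighbourhood $U$ of $0$ whose iterated sums meet $H$ in compact sets, apply Theorem~\ref{the4.1} with $P=\overline{U}$, and then prove separately that $\pi(\overline{U})$ is closed in $G/H$ (a conclusion Theorem~\ref{the4.1} does not provide on its own). Since $H$ is locally compact, it is closed in $G$ by Proposition~\ref{the2.12}. Fixing a compact neighbourhood $C$ of $0$ in $H$, the set $H\setminus\mathrm{int}_{H}C$ is closed in $H$, hence closed in $G$, and omits $0$; since $G$ is regular I can choose an open neighbourhood $W$ of $0$ with $\overline{W}\cap(H\setminus\mathrm{int}_{H}C)=\emptyset$, so that $\overline{W}\cap H$ is a closed subset of $C$ and hence compact. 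Then, using joint continuity of $\oplus$, continuity of $\ominus$ and Proposition~\ref{Pro2.4}, I pick a symmetric open neighbourhood $U$ of $0$ so small that $U\oplus(U\oplus U)\subseteq W$ and $\ominus(U\oplus U)\oplus(U\oplus U)\subseteq W$. This $U$ will be the neighbourhood asserted by the theorem.

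Next I put $P=\overline{U}$ and check the hypotheses of Theorem~\ref{the4.1}. The set $P$ is closed; it is symmetric, since $\ominus\overline{U}=\overline{\ominus U}=\overline{U}$ by Corollary~\ref{cor2.8} and the symmetry of $U$; it contains the open neighbourhood $U$ of $0$; and, applying twice the inclusion $\overline{A}\oplus\overline{B}\subseteq\overline{A\oplus B}$ established inside the proof of Proposition~\ref{the2.9}, one gets $\overline{P\oplus(P\oplus P)}\subseteq\overline{U\oplus(U\oplus U)}\subseteq\overline{W}$, so that $\overline{P\oplus(P\oplus P)}\cap H$ is a closed subset of the compact set $\overline{W}\cap H$ and hence compact. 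Theorem~\ref{the4.1} now gives that the restriction of $\pi$ to $\overline{U}$ is a perfect mapping of $\overline{U}$ onto the subspace $\pi(\overline{U})$ of $G/H$.

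It remains to prove that $\pi(\overline{U})$ is closed in $G/H$. Take $a\in G$ with $(a\oplus H)\cap\overline{U}=\emptyset$; I produce an open neighbourhood of $\pi(a)$ disjoint from $\pi(\overline{U})$. If $(a\oplus H)\cap\overline{U\oplus U}=\emptyset$, then $\pi(U\oplus a)$ works: it is open (as $\pi$ is open, by Theorem~\ref{the3.7}) and contains $\pi(a)$, and if $\pi(u\oplus a)=\pi(v)$ with $u\in U$, $v\in\overline{U}$, then $u\oplus(a\oplus H)=v\oplus H$ by Proposition~\ref{Pro4.3}, so $a\oplus h=\ominus u\oplus v\in\overline{U}\oplus\overline{U}\subseteq\overline{U\oplus U}$ for some $h\in H$ (using the left cancellation law and the symmetry of $U$), contradicting the case hypothesis. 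Otherwise $a\oplus h_1\in\overline{U\oplus U}$ for some $h_1\in H$; since $(a\oplus h_1)\oplus H=a\oplus(h_1\oplus H)=a\oplus H$ (Proposition~\ref{Pro4.3} and $h_1\oplus H=H$), I may replace $a$ by $a\oplus h_1$ and assume $a\in\overline{U\oplus U}$, still with $(a\oplus H)\cap\overline{U}=\emptyset$. The left gyrotranslation $L_{\ominus a}$ carries $(a\oplus H)\cap\overline{U\oplus U}$ homeomorphically onto $H\cap(\ominus a\oplus\overline{U\oplus U})$ (by left cancellation); since $\ominus a\in\overline{\ominus(U\oplus U)}$ by Corollary~\ref{cor2.8}, this set lies in $H\cap\overline{\ominus(U\oplus U)\oplus(U\oplus U)}\subseteq H\cap\overline{W}$, hence is compact, and it is disjoint from the closed set $\overline{U}$. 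Therefore there is a symmetric open neighbourhood $W'$ of $0$ with $W'\subseteq U$ and $\bigl(W'\oplus((a\oplus H)\cap\overline{U\oplus U})\bigr)\cap\overline{U}=\emptyset$. Then $\pi(W'\oplus a)$ is an open neighbourhood of $\pi(a)$, and the computation in the proof of Theorem~\ref{the4.1} applies: if $\pi(w\oplus a)=\pi(v)$ with $w\in W'$, $v\in\overline{U}$, then $a\oplus h=\ominus w\oplus v\in\overline{U\oplus U}$ for some $h\in H$, so $a\oplus h\in(a\oplus H)\cap\overline{U\oplus U}$ and $v=w\oplus(a\oplus h)\in W'\oplus((a\oplus H)\cap\overline{U\oplus U})$, a contradiction. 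Hence $G/H\setminus\pi(\overline{U})$ is open, $\pi(\overline{U})$ is closed, and the proof is complete.

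I expect the main obstacle to be this last step. Theorem~\ref{the4.1} only yields perfectness onto the subspace $\pi(\overline{U})$, not that $\pi(\overline{U})$ is closed in $G/H$; proving the latter forces one to rerun the closed-map argument of Theorem~\ref{the4.1} with the base point allowed to move within its coset, and this is precisely why $U$ must be taken small enough that even the ``fourfold'' set $\ominus(U\oplus U)\oplus(U\oplus U)$ meets $H$ in a compact set. Controlling the gyration corrections while verifying the various inclusions into $\overline{W}$ — repeatedly invoking $\overline{A}\oplus\overline{B}\subseteq\overline{A\oplus B}$, Corollary~\ref{cor2.8}, and Proposition~\ref{Pro4.3} — is the technical heart; the rest is the routine reduction carried out in the first two paragraphs.
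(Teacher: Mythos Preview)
Your proof is correct, but it takes a different route from the paper's. You apply Theorem~\ref{the4.1} with $P=\overline{U}$ for a single carefully chosen $U$ and then prove directly, by a case split and a rerun of the closed-map argument, that $\pi(\overline{U})$ is closed in $G/H$; this forces you to impose the extra ``fourfold'' condition $\ominus(U\oplus U)\oplus(U\oplus U)\subseteq W$ so that the relevant coset slices stay compact. The paper instead applies Theorem~\ref{the4.1} to a larger $P=\overline{U_0}$, then invokes the regularity of $G/H$ (Theorem~\ref{th4.8}) to pick an open $V_0\ni\pi(0)$ with $\overline{V_0}\subseteq\pi(U_0)$ and sets $U=\pi^{-1}(V_0)\cap U_0$; now $\overline{U}$ is closed in $P$, so $\pi(\overline{U})$ is closed in $\pi(P)$ (since $\pi|_P$ is already known to be closed), and since $\pi(\overline{U})\subseteq\overline{V_0}\subseteq\pi(P)$ with $\overline{V_0}$ closed in $G/H$, it follows that $\pi(\overline{U})$ is closed in $G/H$. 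The paper's argument is shorter and reuses the perfectness of $\pi|_P$ rather than reproving closedness from scratch; your argument is more hands-on and avoids appealing to Theorem~\ref{th4.8}, at the cost of the extra smallness hypothesis and the two-case analysis.
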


\begin{proof}
First, $H$ is closed in $G$ by Proposition \ref{the2.12}.
Since $H$ is locally compact,
there exists an open neighbourhood $V$ of 0 in $G$ such that $\overline{V\cap H}$ is compact.
By the regularity of $G$, we can choose an open
neighborhood $W$ of 0 such that $\overline{W}\subseteq V$. Hence $\overline{W}\cap H$ is compact. Let $U_0$ be an arbitrary symmetric open neighborhood of 0 such that $U_0\oplus(U_0\oplus U_0)\subseteq W$. By the joint continuity,
we have $\overline{U_0}\oplus(\overline{U_0} \oplus\overline{U_0})\subseteq \overline{U_0\oplus(U_0\oplus U_0)}\subseteq \overline{W}$. Then the set $P=\overline{U_0}$ satisfies all restrictions on $P$ in Theorem \ref{the4.1}. It follows from
Theorem \ref{the4.1} that the restriction of $\pi$ to $P$ is a perfect mapping from $P$ onto the subspace $\pi(P)$.

Since $\pi$ is an open mapping, the set $\pi(U_0)$ is open in $G/H$. Since $G/H$ is regular by Theorem \ref{th4.8}, there exists an open neighborhood $V_0$ of $\pi(0)$ in $G/H$ such that $\overline{V_0}\subseteq\pi(U_0)$. Hence $U = \pi^{-1}(V_0)\cap U_0$ is an open neighborhood of 0 contained in $P$ such that the restriction $f$
of $\pi$ to $\overline{U}$ is a perfect mapping from $\overline{U}$ onto the subspace $\pi(\overline{U})$. Furthermore, $\pi(\overline{U})$ is closed in $\pi(P)$, and
$\pi(\overline{U})\subseteq \overline{V_0} \subseteq \pi(U_0)\subseteq \pi(P)$.
Then $\pi(\overline{U})$ is closed in $\overline{V_0}$, so that $\pi(\overline{U})$ is closed in $G/H$.
\end{proof}

Recall that a {\it regular ~closed ~set} in a space is the closure of an open subset of this space.

\begin{corollary}\label{cor4.4}
Assume that $\mathscr{P}$ is a topological property preserved by preimages of spaces under perfect
mappings (in the class of completely regular spaces) and also inherited by regular closed sets. Assume further
that $(G, \tau, \oplus)$ is a topological gyrogroup, $H$ is a locally
compact strongly $L$-subgyrogroup of $G$, and the quotient space $G/H$ has the property $\mathscr{P}$. Then
there exists an open neighborhood $\overline{U}$ of the identity element 0 such that $\overline{U}$ has the property $\mathscr{P}$.
\end{corollary}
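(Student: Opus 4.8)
The plan is to feed Theorem \ref{the4.2} into the two stability hypotheses imposed on $\mathscr{P}$. First I would apply Theorem \ref{the4.2} to obtain an open neighbourhood $U$ of the neutral element $0$ in $G$ such that $\pi(\overline{U})$ is closed in $G/H$ and the restriction $f=\pi|_{\overline{U}}\colon\overline{U}\to\pi(\overline{U})$ is a perfect mapping onto the subspace $\pi(\overline{U})$ of $G/H$. The space $\overline{U}$, being a subspace of the topological gyrogroup $G$, is completely regular, and so is the subspace $\pi(\overline{U})$ of the quotient space $G/H$ (recall from Theorem \ref{th4.8} that $G/H$ is regular, and these quotients are in fact completely regular); hence $f$ is a perfect mapping between completely regular spaces, which is precisely the setting in which $\mathscr{P}$ is assumed to be inverse-invariant.

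Next I would verify that $\pi(\overline{U})$ is a \emph{regular closed} subset of $G/H$, i.e.\ the closure of some open subset. Since $\pi$ is open by Theorem \ref{the3.7} and $U$ is open, $\pi(U)$ is open in $G/H$; by continuity of $\pi$ we have $\pi(\overline{U})\subseteq\overline{\pi(U)}$; and since $\pi(\overline{U})$ is closed and contains $\pi(U)$, also $\overline{\pi(U)}\subseteq\pi(\overline{U})$. Therefore $\pi(\overline{U})=\overline{\pi(U)}$, exhibiting $\pi(\overline{U})$ as the closure of the open set $\pi(U)$, hence as a regular closed subset of $G/H$.

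Finally I would assemble the conclusion. By hypothesis $G/H$ has property $\mathscr{P}$, and $\mathscr{P}$ is inherited by regular closed sets, so $\pi(\overline{U})$ has $\mathscr{P}$. Then, since $f\colon\overline{U}\to\pi(\overline{U})$ is a perfect mapping of completely regular spaces onto a space with $\mathscr{P}$, and $\mathscr{P}$ is preserved by preimages under perfect mappings in the class of completely regular spaces, the space $\overline{U}$ has $\mathscr{P}$, which is exactly the assertion (with $\overline{U}$ the closed neighbourhood of $0$ produced by Theorem \ref{the4.2}).

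The two point-set steps are routine; the only places requiring care are the identity $\pi(\overline{U})=\overline{\pi(U)}$, which is what makes the ``regular closed'' hypothesis on $\mathscr{P}$ applicable, and the observation that all the spaces involved ($\overline{U}$, $\pi(\overline{U})$, and the ambient $G/H$) are completely regular, so that the perfect-preimage hypothesis can legitimately be invoked; the latter rests on the complete regularity of topological gyrogroups and of the quotients considered here, in the spirit of Theorem \ref{th4.8}.
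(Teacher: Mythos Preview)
Your proof is correct and follows exactly the approach the paper intends: the paper's own proof is the single sentence ``This is an immediate corollary from Theorem~\ref{the4.2},'' and your argument simply unpacks that, supplying in particular the verification that $\pi(\overline{U})=\overline{\pi(U)}$ is regular closed so that both hypotheses on $\mathscr{P}$ can be invoked. Your added remarks on complete regularity address a point the paper leaves implicit but do not alter the line of argument.
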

\begin{proof}
This is an immediate corollary from Theorem \ref{the4.2}.
\end{proof}

Given a space $X$ and a property $\mathscr{P}$, if each point $x$ of $X$ has an open neighborhood $U(x)$ such that
$\overline{U(x)}$ has $\mathscr{P}$, then we say that $X$ has the property $\mathscr{P}$ $locally$ \cite{Arha}. It is well known that local compactness,
countable compactness, pseudocompactness, paracompactness, the Lindel$\ddot{o}$f property,
$\sigma$-compactness, $\check{C}$ech-completeness, the Hewitt-Nachbin completeness, and the property of being a $k$-space are all inherited by
regular closed sets and preserved by perfect preimages, see \cite[Sections 3.7, 3.10, 3.11]{Eng}.
This observation proves the following statement:

\begin{corollary}\label{cor4.5}
Suppose that $G$ is a topological gyrogroup, and that $H$ is a locally
compact strongly $L$-subgyrogroup of $G$ such that the quotient space $G/H$ has some of the following
properties:
\begin{enumerate}
\item[(1)] $G/H$ is locally compact;
\item[(2)] $G/H$ is locally countably compact;
\item[(3)] $G/H$ is locally pseudocompact;
\item[(4)] $G/H$ is locally $\sigma$-compact;
\item[(5)] $G/H$ is locally paracompact;
\item[(6)] $G/H$ is locally Lindel$\ddot{o}$f;
\item[(7)] $G/H$ is locally $\check{C}$ech-complete; and
\item[(8)] $G/H$ is locally realcompact,
\end{enumerate}
then $G$ also has the same property.
\end{corollary}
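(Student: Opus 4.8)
The plan is to run the same scheme that underlies Theorem~\ref{the4.2} and Corollary~\ref{cor4.4}, but starting from a local hypothesis on $G/H$ and producing a local conclusion on $G$. Fix one of the eight properties and denote it by $\mathscr{P}$; by the remarks preceding the statement, each such $\mathscr{P}$ is inherited by regular closed subsets and is preserved by perfect preimages within the class of completely regular spaces. By the very definition of ``$\mathscr{P}$ locally'' it suffices to find a single open neighbourhood $U$ of the neutral element $0$ of $G$ such that $\overline{U}$ has $\mathscr{P}$: indeed, for an arbitrary $x\in G$ the left translation $L_x$ is a homeomorphism (Proposition~\ref{Pro2.4}(2)), so $x\oplus U$ is an open neighbourhood of $x$ with $\overline{x\oplus U}=x\oplus\overline{U}$ homeomorphic to $\overline{U}$, hence also having $\mathscr{P}$; thus $G$ has $\mathscr{P}$ locally.

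To construct such a $U$, use that $G/H$ has $\mathscr{P}$ locally: there is an open neighbourhood $W$ of $\pi(0)$ in $G/H$ with $\overline{W}$ having $\mathscr{P}$. Theorem~\ref{the4.2} yields an open neighbourhood $U'$ of $0$ in $G$ such that $\pi(\overline{U'})$ is closed in $G/H$ and $\pi|_{\overline{U'}}\colon\overline{U'}\to\pi(\overline{U'})$ is a perfect mapping. Now set $U=U'\cap\pi^{-1}(W)$, an open neighbourhood of $0$. Since $\overline{U}$ is a closed subspace of $\overline{U'}$, the restriction $f=\pi|_{\overline{U}}$ is still a perfect mapping of $\overline{U}$ onto $\pi(\overline{U})$ (the restriction of a perfect map to a closed subspace is perfect onto its image), and $\pi(\overline{U})$ is closed in $\pi(\overline{U'})$, hence closed in $G/H$. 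Moreover $U\subseteq\pi^{-1}(W)\subseteq\pi^{-1}(\overline{W})$ and the latter set is closed, so $\overline{U}\subseteq\pi^{-1}(\overline{W})$ and therefore $\pi(\overline{U})\subseteq\overline{W}$; also $\pi(U)\subseteq W$.

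The step that requires care — and the main obstacle — is that $\mathscr{P}$ is only known to pass to \emph{regular} closed subsets, not to arbitrary closed subsets (pseudocompactness being the standard obstruction), so one cannot simply argue that $\pi(\overline{U})$ is a closed subset of the space $\overline{W}$ which has $\mathscr{P}$. Instead I would show that $\pi(\overline{U})$ is a \emph{regular} closed subset of $\overline{W}$. This rests on the identity $\pi(\overline{U})=\overline{\pi(U)}$, the closure being taken in $G/H$: the inclusion $\pi(\overline{U})\subseteq\overline{\pi(U)}$ is just continuity of $\pi$, while $\overline{\pi(U)}\subseteq\pi(\overline{U})$ holds because $\pi(U)\subseteq\pi(\overline{U})$ and $\pi(\overline{U})$ is closed in $G/H$ by the previous paragraph. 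Since $\pi$ is open, $\pi(U)$ is open in $G/H$, and being contained in $\overline{W}$ it is open in the subspace $\overline{W}$; its closure inside $\overline{W}$ equals $\overline{\pi(U)}=\pi(\overline{U})$, exhibiting $\pi(\overline{U})$ as regular closed in $\overline{W}$. Hence $\pi(\overline{U})$ has $\mathscr{P}$. Finally $\overline{U}$ is a closed subspace of the completely regular topological gyrogroup $G$, and $f$ is a perfect mapping of $\overline{U}$ onto $\pi(\overline{U})$, which has $\mathscr{P}$; since $\mathscr{P}$ is preserved by perfect preimages in the class of completely regular spaces, $\overline{U}$ has $\mathscr{P}$, as required. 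Carrying this out for each of the eight properties listed gives the corollary.
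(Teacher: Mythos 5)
Your proof is correct and follows the same route the paper intends: Theorem~\ref{the4.2} combined with the observation that each listed property is inherited by regular closed sets and preserved by perfect preimages, with left translations supplying homogeneity of $G$. In fact you fill in details the paper leaves implicit --- notably the verification that $\pi(\overline{U})=\overline{\pi(U)}$ is \emph{regular} closed (not merely closed) in $\overline{W}$, which is exactly the point needed to handle the local hypothesis on $G/H$ and, e.g., the pseudocompact case.
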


\begin{corollary}
Let $(G, \tau, \oplus)$ be a topological gyrogroup,
and let $H$ be a locally compact strongly $L$-subgyrogroup. If the quotient space $G/H$ is a $k$-space,
then $G$ is also a $k$-space.
\end{corollary}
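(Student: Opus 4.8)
The plan is to reduce the statement to the local information supplied by Theorem~\ref{the4.2}, and then to globalise it using the translation homogeneity of $G$ together with the elementary fact that ``being a $k$-space'' is a property of local character.

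First I would apply Theorem~\ref{the4.2}: since $H$ is a locally compact strongly $L$-subgyrogroup of $G$, there is an open neighbourhood $U$ of the neutral element $0$ such that $\pi(\overline{U})$ is closed in $G/H$ and the restriction $f$ of $\pi$ to $\overline{U}$ is a perfect mapping of $\overline{U}$ onto the subspace $\pi(\overline{U})$. Because $G/H$ is a $k$-space and a closed subspace of a $k$-space is again a $k$-space, $\pi(\overline{U})$ is a $k$-space; because the $k$-space property is preserved by perfect preimages, $\overline{U}$ is a $k$-space as well. (Equivalently one may cite Corollary~\ref{cor4.4} with $\mathscr{P}$ the property of being a $k$-space, noting that $\pi$ is open by Theorem~\ref{the3.7}, so that $\pi(\overline{U})=\overline{\pi(U)}$ is in fact a regular closed subset of $G/H$.)

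Next I would transport this to every point of $G$. For $a\in G$, the left translation $L_a$ is a homeomorphism of $G$ by Proposition~\ref{Pro2.4}(2), hence $a\oplus U$ is an open neighbourhood of $a$ with $\overline{a\oplus U}=a\oplus\overline{U}=L_a(\overline{U})$ homeomorphic to $\overline{U}$, so $\overline{a\oplus U}$ is a $k$-space. Thus $\{a\oplus U:a\in G\}$ is an open cover of $G$ all of whose members have $k$-space closures. Finally I would invoke the locality of the $k$-space property: if a space $X$ has an open cover $\{W_\alpha\}$ with each $\overline{W_\alpha}$ a $k$-space, then $X$ is a $k$-space. Indeed, if $A\subseteq X$ meets every compact subset of $X$ in a relatively closed set, then, since every compact subset of $\overline{W_\alpha}$ is compact in $X$, the set $A\cap\overline{W_\alpha}$ is closed in $\overline{W_\alpha}$; hence $A\cap W_\alpha$ is closed in $W_\alpha$ for every $\alpha$, and therefore $A$ is closed in $X$. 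Applying this to the cover $\{a\oplus U:a\in G\}$ shows that $G$ is a $k$-space.

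The only genuinely new ingredient beyond what is already available is this last observation that the $k$-space property can be checked on an open cover; everything preceding it is a direct invocation of Theorem~\ref{the4.2} (or Corollary~\ref{cor4.4}) and of the homogeneity machinery already set up. The mild point to get right is that $\pi(\overline{U})$ is, a priori, only asserted to be closed in $G/H$, but since it equals $\overline{\pi(U)}$ with $\pi(U)$ open it is in fact regular closed, so the cited inheritance of the $k$-space property by regular closed sets applies verbatim; alternatively one simply uses the slightly stronger standard fact that closed subspaces of $k$-spaces are $k$-spaces.
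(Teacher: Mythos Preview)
Your proof is correct and follows essentially the same route as the paper: the paper simply cites Corollary~\ref{cor4.4} together with the fact (from Engelking) that a locally $k$-space is a $k$-space, while you unpack both ingredients by invoking Theorem~\ref{the4.2} directly, transporting via left translations, and supplying a short argument for the locality of the $k$-space property. The extra observation that $\pi(\overline{U})=\overline{\pi(U)}$ is regular closed is a nice touch but not strictly needed, since closed subspaces of $k$-spaces are $k$-spaces.
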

\begin{proof}
This statement follows from Corollary \ref{cor4.4},
since the property of being a $k$-space is invariant under taking perfect preimages and a locally
$k$-space is a $k$-space \cite[Sections 3.3]{Eng}.
\end{proof}

Recall that a space $X$ is said to be {\it zero-dimensional} if it has a base consisting of sets which are both
open and closed in $X$.

The following result improves \cite[Theorem 3.5]{Bao1}.

\begin{theorem}
Suppose that $G$ is a zero-dimensional topological gyrogroup and that $H$ is
a locally compact strongly $L$-subgyrogroup of $G$. Then the quotient space $G/H$ is also zero-dimensional.
\end{theorem}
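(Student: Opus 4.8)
The plan is to combine the perfect–mapping theorem just established (Theorem~\ref{the4.2}) with the homogeneity of the quotient (Theorem~\ref{the3.7}), and the whole argument rests on one small observation: a subset of $G$ that is clopen \emph{in $G$} but is contained in the distinguished neighbourhood $U$ furnished by Theorem~\ref{the4.2} has a clopen image in $G/H$.

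First I would note that $H$ is closed in $G$ by Proposition~\ref{the2.12}, so that Theorems~\ref{the3.7} and~\ref{the4.2} both apply. By Theorem~\ref{the3.7} the natural map $\pi\colon G\to G/H$ is open and continuous and $G/H$ is homogeneous; hence it suffices to produce a neighbourhood base at the single point $\pi(0)$ consisting of sets that are clopen in $G/H$, since the translations $h_a$ then transport such a base to every point and yield a clopen base for $G/H$. Next, using Theorem~\ref{the4.2}, I would fix an open neighbourhood $U$ of $0$ in $G$ such that $\pi(\overline U)$ is closed in $G/H$ and $f:=\pi|_{\overline U}\colon\overline U\to\pi(\overline U)$ is a perfect, hence closed, mapping.

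Now let $W$ be an arbitrary open neighbourhood of $\pi(0)$ in $G/H$. The set $\pi^{-1}(W)\cap U$ is an open neighbourhood of $0$ in $G$, so, using that $G$ is zero-dimensional, I would choose a set $N$ that is clopen in $G$ with $0\in N\subseteq \pi^{-1}(W)\cap U$. I claim $\pi(N)$ is clopen in $G/H$ with $\pi(0)\in\pi(N)\subseteq W$. Indeed $\pi(0)\in\pi(N)$ because $0\in N$, and $\pi(N)\subseteq\pi(\pi^{-1}(W))=W$ because $N\subseteq\pi^{-1}(W)$ and $\pi$ is onto; $\pi(N)$ is open because $N$ is open in $G$ and $\pi$ is an open map; and $\pi(N)$ is closed because $N$, being closed in $G$ and contained in $\overline U$, is closed in $\overline U$, so $f(N)=\pi(N)$ is closed in the subspace $\pi(\overline U)$, which in turn is closed in $G/H$. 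Thus $G/H$ has a clopen neighbourhood base at $\pi(0)$, and by homogeneity $G/H$ is zero-dimensional.

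The only genuinely delicate point is the choice of $N$, and it is exactly where the two halves of the hypothesis meet: $N$ must be clopen in the whole group $G$, so that openness of $\pi$ makes $\pi(N)$ open, and simultaneously $N$ must be squeezed inside $\overline U$, so that the closedness of $\pi$ \emph{after} restriction to $\overline U$ makes $\pi(N)$ closed. The naive attempt to push an arbitrary clopen subset of $G$ down to $G/H$ fails because $\pi$ is not closed; the naive attempt to work inside $\overline U$ fails because $\pi|_{\overline U}$ need not be open; noticing that one can satisfy both requirements at once with a single $N$ is the crux. After that there is no further work — in particular no saturated (union-of-cosets) clopen set has to be constructed — and the local compactness of $H$ enters only through the appeal to Theorem~\ref{the4.2}.
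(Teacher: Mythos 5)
Your proposal is correct and follows essentially the same route as the paper's proof: invoke Theorem~\ref{the4.2} to get the neighbourhood $U$ on whose closure $\pi$ is perfect (hence closed) with $\pi(\overline U)$ closed, pick a clopen set of $G$ inside $U\cap\pi^{-1}(W)$ using zero-dimensionality, and observe its image is clopen since $\pi$ is open globally and closed on $\overline U$. Your write-up is in fact slightly more careful than the paper's, since it makes the final appeal to homogeneity explicit.
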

\begin{proof}
Let $\pi:G\rightarrow G/H$ be the natural quotient mapping of $G$ onto the quotient
space $G/H$. According to Theorem \ref{the4.2}, we can fix an open neighbourhood $U$ of the
neutral element 0 of $G$ such that $\pi(\overline{U})$ is closed in $G$ and the restriction of $\pi$ to $\overline{U}$ is a perfect mapping of $U$ onto the subspace $\pi(\overline{U})$. Take any open neighbourhood $W$ of $\pi(0)$ in $G/H$.
Since the space $G$ is zero-dimensional, we can fix an open and closed neighbourhood $V$ of
0 such that $V\subseteq U\cap\pi^{-1}(W)$. Then $\pi(V)$ is an open subset of $G/H$, since the mapping
$\pi$ is open. On the other hand, $\pi(V)$ is closed in $G/H$, since the restriction of $\pi$ to $\overline{U}$
is a closed mapping and $\pi(\overline{U})$ is closed in $G/H$. Clearly, $\pi(V)\subseteq W$. Hence, $G/H$ is
zero-dimensional.
\end{proof}

We recall that the {\it tightness} of a space $X$ is the minimal cardinal $\tau\geq \omega$ with the property
that for every point $x\in X$ and every set $P\subseteq X$ with $x\in \overline{P}$, there exists a subset $Q$ of $P$ such
that $|Q|\leq\tau$ and $x\in\overline{Q}$.

\begin{lemma}\cite[Proposition 4.7.16]{Arha}\label{lem4.7}
Suppose that $f: X\rightarrow Y$ is a closed continuous mapping of a regular
space $X$ onto a space $Y$ of countable tightness. Suppose further that the tightness of every fiber $f^{-1}(y)$, for
$y\in Y$ , is countable. Then the tightness of $X$ is also countable.
\end{lemma}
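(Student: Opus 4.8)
The plan is to reduce the statement to the standard reformulation of countable tightness in terms of $\omega$-closed sets. Call a subset $A$ of a space \emph{$\omega$-closed} if $\overline{B}\subseteq A$ for every countable $B\subseteq A$; then $t(Z)\le\omega$ if and only if every $\omega$-closed subset of $Z$ is closed. One implication is immediate from the definition of tightness; for the other, note that for an arbitrary $A$ the set $A^{*}=\bigcup\{\overline{B}:B\subseteq A,\ |B|\le\omega\}$ is $\omega$-closed (a countable subset of $A^{*}$ lies in the closure of a single countable subset of $A$), so if $A^{*}$ is closed then $x\in\overline{A}\subseteq\overline{A^{*}}=A^{*}$ places $x$ in the closure of some countable subset of $A$. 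Accordingly, I would fix an $\omega$-closed set $A\subseteq X$ and suppose, toward a contradiction, that there is a point $x\in\overline{A}\setminus A$; put $y=f(x)$ and $F=f^{-1}(y)$.

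First I would deal with the ``fibre direction''. The set $A\cap F$ is $\omega$-closed in the subspace $F$: if $B\subseteq A\cap F$ is countable, then $\overline{B}\subseteq A$ by hypothesis, so the closure of $B$ in $F$, which is $\overline{B}\cap F$, lies in $A\cap F$. Since $t(F)\le\omega$, $A\cap F$ is closed in $F$, and hence $\overline{A\cap F}\cap F=A\cap F$ (closure taken in $X$); as $x\in F$ and $x\notin A$, this gives $x\notin\overline{A\cap F}$. By regularity of $X$ I may then choose an open set $W\ni x$ with $\overline{W}\cap\overline{A\cap F}=\emptyset$. Writing $A=(A\cap F)\cup(A\setminus F)$ and combining $x\in\overline{A}$ with $x\notin\overline{A\cap F}$ yields $x\in\overline{A\setminus F}$, and therefore $x\in\overline{A_{2}}$, where $A_{2}:=(A\setminus F)\cap W$.

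Now I would bring in the tightness of $Y$ and the fact that $f$ is closed. By continuity $y=f(x)\in\overline{f(A_{2})}$, so $t(Y)\le\omega$ gives a countable $C\subseteq f(A_{2})$ with $y\in\overline{C}$; choose $a_{c}\in A_{2}$ with $f(a_{c})=c$ for each $c\in C$ and set $B=\{a_{c}:c\in C\}$, a countable subset of $A$. Since $f$ is closed, $f(\overline{B})$ is closed in $Y$ and contains $f(B)=C$, hence contains $\overline{C}\ni y$; thus some $b\in\overline{B}$ satisfies $f(b)=y$, i.e.\ $\overline{B}\cap F\ne\emptyset$. But $B\subseteq A_{2}\subseteq W$ forces $\overline{B}\subseteq\overline{W}$, and $B\subseteq A$ countable forces $\overline{B}\subseteq A$ by $\omega$-closedness, so $\emptyset\ne\overline{B}\cap F\subseteq\overline{W}\cap A\cap F=\emptyset$, which is absurd. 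Hence $A$ is closed, so every $\omega$-closed subset of $X$ is closed, and $t(X)\le\omega$.

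The step I expect to be the real obstacle is making the two ingredients --- the tightness of the fibre and the tightness of the base --- cooperate. One must first use $t(F)\le\omega$ together with regularity to trap $x$ in a neighbourhood $W$ disjoint from $A\cap F$, and only afterwards apply $t(Y)\le\omega$, and apply it to $f(A_{2})$ rather than to $f(A)$: feeding $f(A)$ into $Y$ still produces a countable $B\subseteq A$ with $\overline{B}\cap F\ne\emptyset$, but that by itself is harmless, since a single countable $B$ need not capture $x$; it is the confinement $B\subseteq W$, forcing $\overline{B}\cap F\subseteq\overline{W}$, that makes $\overline{B}\cap F\ne\emptyset$ collide with $\overline{W}\cap(A\cap F)=\emptyset$ and thereby delivers the contradiction. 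Everything else --- the identity ``closure in a subspace equals intersection of the closure with the subspace'', additivity of closure over finite unions, and the continuity/closedness bookkeeping for $f$ --- is routine.
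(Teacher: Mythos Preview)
The paper does not give a proof of this lemma; it simply quotes it as \cite[Proposition 4.7.16]{Arha} and uses it as a black box. So there is no in-paper argument to compare against.

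That said, your proof is correct and is essentially the standard argument one finds in the cited source. The reformulation via $\omega$-closed sets is exactly right, and the two key manoeuvres --- using $t(F)\le\omega$ plus regularity to separate $x$ from $A\cap F$ by an open $W$, and then using $t(Y)\le\omega$ together with closedness of $f$ to produce a countable $B\subseteq A\cap W$ whose closure meets $F$ --- are precisely how the proof in \cite{Arha} proceeds. Your final paragraph correctly identifies the point where care is needed: one must apply the tightness of $Y$ to $f(A_2)$ with $A_2\subseteq W$, not to $f(A)$, so that the resulting $\overline{B}$ is trapped inside $\overline{W}$ and the collision $\overline{B}\cap F\subseteq \overline{W}\cap(A\cap F)=\emptyset$ goes through. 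All the auxiliary facts you invoke (subspace closure formula, finite additivity of closure, $f(\overline{B})$ closed) are routine and used correctly.
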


The following result improves \cite[Theorem 3.10]{Bao1}.
\begin{theorem}
Suppose that $G$ is a topological gyrogroup, and that $H$ is a locally compact
metrizable strongly $L$-subgyrogroup of $G$ such that tightness of the quotient space $G/H$ is countable. Then
the tightness of $G$ is also countable.
\end{theorem}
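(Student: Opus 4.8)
The plan is to deduce the statement from Lemma~\ref{lem4.7} by feeding it the perfect mapping furnished by Theorem~\ref{the4.2}. First I would note that $H$, being a locally compact subgyrogroup, is closed in $G$ by Proposition~\ref{the2.12}, so Theorem~\ref{the4.2} applies and yields an open neighbourhood $U$ of the neutral element $0$ such that $\pi(\overline{U})$ is closed in $G/H$ and the restriction $f=\pi|_{\overline{U}}\colon \overline{U}\to\pi(\overline{U})$ is a perfect mapping; in particular $f$ is a closed continuous surjection.

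Next I would verify the three hypotheses of Lemma~\ref{lem4.7} for $f$. The domain $\overline{U}$ is a subspace of $G$, which is regular by \cite[Theorem 3]{Atip}, hence $\overline{U}$ is regular. The range $\pi(\overline{U})$ is a subspace of $G/H$, and since tightness is monotone with respect to subspaces (if $y\in\overline{P}$ inside a subspace $Y$, then $y\in\overline{P}$ in the whole space, so a countable $Q\subseteq P$ with $y\in\overline{Q}$ in the whole space already works inside $Y$), $\pi(\overline{U})$ has countable tightness. Finally, for $y=\pi(a)\in\pi(\overline{U})$ with $a\in\overline{U}$ one has $f^{-1}(y)=(a\oplus H)\cap\overline{U}$; the left gyrotranslation $L_{\ominus a}$ is a homeomorphism of $G$ (Proposition~\ref{Pro2.4}(2)) which, by Proposition~\ref{Pro4.3}, sends $a\oplus H$ onto $(\ominus a\oplus a)\oplus H=H$, and therefore carries $f^{-1}(y)$ homeomorphically onto $H\cap(\ominus a\oplus\overline{U})$, a subspace of the metrizable space $H$. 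Thus each fibre of $f$ is metrizable, hence first-countable, hence of countable tightness. Lemma~\ref{lem4.7} now gives that $\overline{U}$ has countable tightness.

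It remains to pass from $\overline{U}$ to $G$. If $P\subseteq G$ and $0\in\overline{P}$, then $0\in\overline{P\cap U}$ as well (any open neighbourhood $W$ of $0$ with $W\subseteq U$ meets $P$, hence meets $P\cap U$); since $P\cap U\subseteq\overline{U}$ and $0\in U\subseteq\overline{U}$, the point $0$ lies in the closure of $P\cap U$ taken in the subspace $\overline{U}$, so by the countable tightness of $\overline{U}$ there is a countable $Q\subseteq P\cap U\subseteq P$ with $0\in\overline{Q}$. Hence $G$ has countable tightness at $0$, and since the left translations of $G$ are homeomorphisms (Proposition~\ref{Pro2.4}(2)), $G$ is homogeneous and therefore has countable tightness everywhere.

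The argument is essentially an assembly: Theorem~\ref{the4.2} supplies the perfect map, Lemma~\ref{lem4.7} does the work, and the remaining ingredients (heredity of tightness to subspaces, and that countable tightness at one point plus homogeneity gives countable tightness) are elementary. The one place that genuinely needs attention is the verification that the fibres of $f$ have countable tightness; this is exactly where metrizability of $H$ is used rather than mere local compactness, since the fibres, although compact, could otherwise fail to have countable tightness.
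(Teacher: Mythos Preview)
Your proof is correct and follows essentially the same approach as the paper: apply Theorem~\ref{the4.2} to obtain the perfect map on $\overline{U}$, invoke Lemma~\ref{lem4.7} (using that the fibres are subspaces of the metrizable $H$), and then extend from $\overline{U}$ to all of $G$ by homogeneity. You have simply filled in details that the paper leaves implicit, such as the heredity of tightness to subspaces, the explicit identification of the fibres with subspaces of $H$ via Proposition~\ref{Pro4.3}, and the local-to-global step at the end.
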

\begin{proof}
It follows from Theorem \ref{the4.2} that there exists an open neighbourhood $U$ of
the neutral element 0 in $G$ such that $\overline{U}$ is a preimage of a space of countable tightness under
a perfect mapping with metrizable fibers.
Then, by Lemma \ref{lem4.7},
the tightness of $\overline{U}$ is also countable. Since $U$ is a non-empty open subset of the homogeneous space $G$, the
tightness of $G$ is countable.
\end{proof}

%%%%%%%%%%%%%%%%%%%%%%%%%%%%%%%%%%%%%%%%%%%%%%%%
%%%%%%%%%%%%%%%%%%%%%%%%%%%%%%%%%%%%%%%%%%%%%%%%
\section{Acknowledgements}
We wish to thank the reviewers for many valuable advices, and all their
efforts in order to improve the paper.

%%%%%%%%%%%%%%%%%%%%%%%%%%%%%%%%%%%%%%%%%%%%%%%%
%%%%%%%%%%%%%%%%%%%%%%%%%%%%%%%%%%%%%%%%%%%%%%%%
%\bibliographystyle{amsplain}

\end{document}